\newcommand{\newchi}{\ensuremath \raisebox{2pt}{$\chi$}}
\def\XXint#1#2#3{{\setbox0=\hbox{$#1{#2#3}{\int}$ }
\vcenter{\hbox{$#2#3$ }}\kern-.6\wd0}}
\def\Ld{\Lambda}
\newtheorem{thm}{Theorem}[section]
\newtheorem{cor}[thm]{Corollary}
\newtheorem{lem}[thm]{Lemma}
\newtheorem{prop}[thm]{Proposition}
\newtheorem{exam}[thm]{Example}
\newtheorem{defn}[thm]{Definition}
\theoremstyle{remark}
\newtheorem{rem}{Remark}[section]
\numberwithin{equation}{section}
 \def\tr{{\triangle}}
\def\f{\frac}
\def\va{\varepsilon}
\def\Bl{\Bigl}
\def\Br{\Bigr}
 \def\a{{\alpha}}
 \def\b{{\beta}}
 \def\g{{\gamma}}
 \def\t{{\theta}}
 \def\s{{\sigma}}
 \def\la{{\langle}}
 \def\ra{{\rangle}}
 \def\CC{{\mathbb C}}
 \def\NN{{\mathbb N}}
 \def\RR{{\mathbb R}}
 \def\ZZ{{\mathbb Z}}
        \def\vi{\varphi}
        \def\p{\partial}
\newcommand{\wt}{\widetilde}
\newcommand{\wh}{\widehat}
\newcommand{\al}{\alpha}
\def\be{\beta}
\def\bl{\bigl}
\def\br{\bigr}
\def\Ld{\Lambda}
\def\Bl{\Bigl}
\def\Br{\Bigr}
\def\f{\frac}
\def\({\Bigl(}
\def \){ \Bigr)}
\def\ga{\gamma}
\def\hb{\hfill$\Box$}
\def\sa{\sigma}
\def\DD{\mathcal{D}}
\def\VV{\mathcal{V}}
\begin{document}

\title[]{Compression using quasi-interpolation} 
   \author{ Martin D. Buhmann}

   \address{Mathematics Institute,
   Justus-Liebig University\\
    Arndtstrasse 2\\
     D-35392 Giessen, Germany}
\email{buhmann@math.uni-giessen.de}

\author{Feng Dai}
\address{Department of Mathematical and Statistical Sciences\\
University of Alberta\\ Edmonton, Alberta T6G 2G1, Canada.}
\email{fdai@ualberta.ca}
\thanks{The work of the second  author was  supported  in part by NSERC  Canada under
grant RGPIN 311678-2010 and by the Alexander-von-Humboldt Foundation.}

\begin{abstract}
We consider quasi-interpolation with a main application
in radial basis function approximations and compression in this article.
Constructing and using these quasi-interpolants, we  consider wavelet and compression-type
  approximations from their linear spaces and provide convergence
  estimates. The results include an error estimate for nonlinear
  approximation by 
  quasi-interpolation,
  results about compression in the space of
  continuous functions and a pointwise convergence estimate for
  approximands of low smoothness. \end{abstract}
  \maketitle

  \section{Introduction}

We wish to study multivariate approximation schemes in this artice
which are useful approximation methods called quasi-interpolation methods. A principal
useful context -- but not the only one --  we have in mind is the related to
the so-called
radial basis functions and the linear spaces spanned by their
translates because they in particular provide excellent means for multivariate approximations in
$d$-dimensional spaces. They are known in a variety of types for at
least thirty years \cite{Franke}. Among their attractive,
interesting properties are {\it (i)\/} their approximation orders \cite{Buhmann, We}
and {\it (ii)\/} their ability to adapt to many different applications
in several dimensions. Thus
they are similar to other successful approximation methods
such as multivariate splines or finite elements (both as the
aforementoned piecewise polynomials) in low dimensional ambient spaces
\cite{BS},\cite{deBHRS}. The most often used forms of
approximation are either interpolation at the same points by which
they are translated \cite{Buhmann,BDR}, or quasi-interpolation (as detailed below), or
wavelets \cite{Daubechies}, \cite{BuhmMC}, \cite{BuhmMic}.

All the different approaches to multivariate approximation
have very positive aspects; in this article we
choose among them the mentioned quasi-interpolation \cite{BuhmJAT}. To
this end, let $A$ be a set of quasi-uniformly distributed points in
$\RR^d$, let a function $f:\RR^d\to\RR$ be a at a minimum continuous
given and to be approximated (the ``approximand'') based on knowing
its values on $A$, and let $\psi_\alpha$, $\alpha\in A$, be decaying
continuous functions. The we define the quasi-interpolant in the
simplest case as
$$ Qf(x)=\sum_{\alpha\in A}f(\alpha)\psi_\alpha(x),\qquad x\in\RR^d,$$
requiring that the $\psi_\alpha$ decay sufficiently fast for large
argument,  so that the
above sum is well-defined, that is
$$ |\psi_\alpha(x)|=O((1+\|x-\alpha|)^{-d-m_2})$$
for a positive $m_2$ and agrees with $f$ in case $f$ is a
polynomial of some maximum degree $\ell$ to be specified below. 

We wish to
find some pointwise convergence
results of such approximations, and we will study compression using them too.
Our theorems include an error estimate for nonlinear
  approximation by quasi-interpolation using radial basis functions and otherwise,
  positive and negative results about compression in the space of
  continuous functions using such approximations, and a pointwise
  convergence estimate for approximands of low smoothness and quasi-interpolation.

First, still keeping in mind the key context in which we wish to
create and use quasi-interpolation, we mention a few advantages of the radial basis function
approach: first they are available in any dimension of
the ambient space, so that it is now possible -- in contrast to many of the polynomial
spline approaches -- to approximate target functions in arbitrary
dimensions. This is needed often in practical scientific applications.
From time to time
they are actually the only feasible approach to function
approximation. 

In this paper, we have some pointwise convergence results for very general sets of
approximands and compression methods. After repeating some preliminary
results partly from \cite{BD1}, we analyse approximants with quasi-interpolation and find
that both for compression and for deriving the convergence properties of the approximants and the
spaces where they are from, quasi-interpolation is a suitable approach. Let us initially describe the form of
the approximants more precisely, including some examples for radial basis functions.

Since they will be our main application, let us briefly summarise that
radial basis function approximants typically have the form of sums
multiples of translates  
$$\vi(\|\cdot-\alpha\|),$$ where the norm is Euclidean, $\alpha\in\RR^d$ are
from a fixed, discrete set $A$ of points (finite in applications, often
infinite in the theoretical analysis, sometimes equally spaced),
and $\vi$ is a continuous, univariate function that is called the
radial basis function. Typical radial functions are multiquadrics
$\vi(r)=\sqrt{r^2+c^2}$ for a positive, fixed parameter $c$ (which
may incidentally be zero, in which case we speak about the linear radial
basis function $\vi(r)=r$), or $\vi(r)=r^2\log r$, the so-called
thin-plate spline \cite{Duchon}, the Dagum-Class \cite{BPDB}
$\phi(r)=1-(r^\beta/(1+r^\beta))^\gamma$ with a
variety of choices for the parameters $\beta$ and $\gamma$, or an exponential function
$\vi(r)=\exp(-c^2r^2)$ or $\vi(r)=\exp(-cr)$ with the same positive
parameter $c$ as above (but here it is not allowed to vanish).



There are many ways to form decaying functions for quasi-interpolation
as (finite) linear
combinations of (increasing) radial basis functions, see for instance
\cite{BDR}; all of them can be viewed as a type of preconditioning of
the approximation methods. For this article we adopt the
conditions with respect to general quasi-interpolation 
from \cite{BDL} as they are general and admit even
non-equally spaced data $\alpha$. The resulting localised functions are
central to pointwise-error estimates which are one of the central features of \cite{BD1} and to some extent in
this work. Estimates in $L^p$-norms such as presented in the third
section are also important, for instance, in the paper \cite{JM}
about errors of interpolation by
surface or thin-plate splines (but not about quasi-interpolation as we
do here), and in the papers
 \cite{BDR, JL, LJC} for $L^p$-approximation
  from shift-invariant spaces (where the
   quasi-interpolation used  is different from that in this paper).  Using Gau\ss-quadrature formulae and similar techniques, the
paper \cite{WX} does indeed consider quasi-interpolation and
$L^p$-error estimates too, but neither methods nor results are going in the
same direction as here.


We  use a wavelet approach
as in \cite{Daubechies}, \cite{BuhmMic}, \cite{BuhmMC}, and compression with general quasi-interpolation. 

We begin our article by summarising suitable conditions on the aforementioned
generation of localised quasi-interpolating basis functions. We will then give some useful pointwise
convergence estimates (as to be contrasted to the usual ones in norms) for approximands which may be
outside the usual smoothness requirements (of lower smoothness; or ``rougher'' in other words) of differentiability as in \cite{BL}.

The idea of compression, i.e.\ approximations employed where the sizes
of the coefficients in use are essential gives rise to the results, whereas approximations of the wavelet-type
are featured at the end of the paper. We include, among other things, a
result about the $N$-width type approximations and a negative result
about compression within the space of continuous functions.

Throughout the paper, every  function on $\RR^d$  and
every subset of $\RR^d$  are  assumed to be  Lebesgue measurable.
 The letter $C$ will  denote a general positive
constant depending only on the parameters indicated
as subscripts,
and the notation  $C'\sim C''$  means that there exist
inessential positive  constants $c_1$, $C_1$ such that $c_1
 C'\leq
C ''\leq C_1 C'$.

\section{Quasi-interpolation: General assumptions and results}

In this section, we summarise  results from \cite{BDL}, \cite{BD1} and  \cite{BuhmJAT}
on quasi-interpolation and using the central example of  radial basis functions which will play  crucial  roles   in   our discussions in the later sections of this article.
We will also at this stage set the general definitions and assumptions on the radial
basis functions $\vi$ in use which enable us to go from plain shifts of
radial basis functions to quasi-interpolants. Indeed, we no longer
require the functions to be radially symmetric as in the introduction
but deal with general, $d$-variate, real-valued continuous functions $\vi$.

We start with a brief description of some necessary notation.  The
Fourier transform 
is specified by
$$\wh{f}(\xi):=\int_{\RR^d} f(y) e^{-i\xi \cdot y}\, dy,\   \ \  \xi\in \RR^d,\   \  f\in L^1(\RR^d),$$
thus the inverse Fourier transform will be scaled in this way:
$$ {1\over(2\pi)^d}\int_{\RR^d} \wh{f}(\xi) e^{i\xi \cdot y}\,
d\xi,\   \ \  y\in \RR^d,\   \  \wh{f}\in L^1(\RR^d).$$
We provide these definitions also because we need to state which
normalisation we take for the transforms.
The  convolution of two functions on $\RR^d$ is defined by
$$f\ast g(x):=\int_{\RR^d} f(x-y) g(y)\, dy,\     \
  x\in\RR^d,$$
  where $ f\in \bigcup_{1\leq p\leq \infty}
  L^p(\RR^d)$   and
  $ g\in L^1$, or vice-versa, or  $f$ and $g$ are both nonnegative.
We denote by  $B_r(x):=\{y\in\RR^d: \|x-y\|\leq r\}$ the ball centred at $x\in\RR^d$ with radius $r$.
  For  $f\in C^k(\RR^d)$ and $x\in \RR^d$, we use the notation $T_x^k
  f$ to denote the Taylor polynomial of $f$ of  degree $k$  at the
  point $x$; that is,
$$ T_x^k f(y)=\sum_{|\ga|\leq k} \f{D^\ga f(x)}{\ga!} (y-x)^\ga,\   \ y\in
\RR^d.$$
Here, with respect to $\ga=(\ga_1, \ldots, \ga_d)\in\ZZ_+^d$ we use the standard multi-index
notation; in particular, $|\ga|=\ga_1+\cdots+\ga_d$ and $D^\ga=\Bl(\f{\p}{\p x_1}\Br)^{\ga_1}\cdots \Bl(\f {\p}{\p x_d}\Br)^{\ga_d}$.
Furthermore, we denote by $\Pi_n^d$  the set  of all algebraic polynomials  of degree at most $n$ on $\RR^d$,
$\mathcal{P}_j^d$ being the homogeneous ones of degree $j$.

The set of centres by which the quasi-interpolating basis functions are translated
is of course relevant in any approach using quasi-interpolation. In our work here, we
allow infinite sets of such centres which need not, however, be placed
on a lattice. 
 A countable subset $A$ of $\RR^d$ is called {\it quasi-uniformly distributed\/} in $\RR^d$ if there exist positive constants
$c_0$ and $C_1$ such that $\RR^d=\cup_{\alpha\in A}  B_{c_0} (\alpha)$, and $$\sum_{\alpha\in A}  \newchi_{B_{c_0}(\alpha)}\leq C_1.$$
 We denote by $\mathcal{A}$ the collection of  all quasi-uniformly distributed subsets of $\RR^d$ with the same constants $c_0$ and $C_1$.
 For convenience, we call the constants $c_0$ and $C_1$ here  the geometric constants of the set $A$.
 Throughout this paper,  the letter $A$  will always  denote a
 quasi-uniformly distributed subset of $\RR^d$ with the same and fixed
 geometric constants  $c_0$ and $C_1$.
 All  the general constants  $C$  in this paper may depend on the
 geometric constants  $c_0$ and $C_1$  but will be independent of  the
 set $A$ itself.

 The following lemma,  proved in \cite[Lemma 3]{BDL}, and used also in
 \cite{BD1}, plays an important general  role in the definition of
 quasi-interpolation:
\begin{lem}\label{lem-2-1}
Given   a quasi-uniformly distributed
subset $A$ of $\RR^d$  and a positive integer $k$,       there exists a sequence of compactly supported  functions   $\{ N_\a\}_{\a\in A}\subset C_c^{k}(\RR^d) $  with the  properties that each $N_\a$ is supported in the ball $B_{M/2}(\a)$ for  a positive constant $M$;     $\sup_{\a\in A} \|D^\g N_{\a}\|_\infty \leq C$  for  all $\ga\in\ZZ_+^d$ with  $|\ga|\leq k$;  and
$$q=\sum_{\a\in A} q(\a) N_{\a},\    \    \   \forall q\in\Pi_k^d.$$
Here the constants $C$ and $M$ depend only on $k$, $d$.
 \end{lem}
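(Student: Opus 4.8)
\smallskip
\noindent\emph{Proof plan.}\quad The plan is to build a smooth partition of unity $\{\t_\a\}_{\a\in A}$ attached to the centres $A$, to compose it with a locally defined least--squares polynomial reproduction operator so that polynomials of degree $\le k$ are recovered exactly, and then to rearrange the resulting sum to read off the functions $N_\a$. First I would fix $\phi\in C_c^\infty(\RR^d)$ with $0\le\phi\le1$, $\phi\equiv1$ on $B_{c_0}(0)$ and $\operatorname{supp}\phi\subset B_{2c_0}(0)$, and put $\phi_\a(x)=\phi(x-\a)$, $\Phi=\sum_{\a\in A}\phi_\a$. The covering property $\RR^d=\bigcup_\a B_{c_0}(\a)$ gives $\Phi\ge1$ pointwise, while the overlap bound $\sum_\a\newchi_{B_{c_0}(\a)}\le C_1$ forces, by a routine volume count, that every ball of radius $2c_0$ contains at most $C(c_0,C_1,d)$ of the $\a$'s; hence $\Phi\in C^\infty(\RR^d)$ with $1\le\Phi\le C$ and bounded derivatives, so $\t_\a:=\phi_\a/\Phi$ satisfies $\t_\a\in C_c^\infty(\RR^d)$, $\operatorname{supp}\t_\a\subset B_{2c_0}(\a)$, $\|D^\g\t_\a\|_\infty\le C$ for $|\g|\le k$, and $\sum_{\a\in A}\t_\a\equiv1$.

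Next I would attach to each $\a$ a local polynomial reproduction. Let $R=R(c_0,C_1,k,d)\ge 2c_0$ be a radius fixed in the last step, $\Lambda_\a:=A\cap B_R(\a)$, and let $P_\a f\in\Pi_k^d$ be the unique minimiser of $\sum_{\xi\in\Lambda_\a}|f(\xi)-p(\xi)|^2$ over $p\in\Pi_k^d$. Provided $\Lambda_\a$ is $\Pi_k^d$--determining this is well defined, linear in the data $\{f(\xi)\}$, and reproduces $\Pi_k^d$, so $P_\a f=\sum_{\xi\in\Lambda_\a}f(\xi)\,w_{\a,\xi}$ for certain polynomials $w_{\a,\xi}\in\Pi_k^d$ with $\sum_{\xi\in\Lambda_\a}q(\xi)\,w_{\a,\xi}=q$ for every $q\in\Pi_k^d$. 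Putting $Qf:=\sum_\a\t_\a\cdot(P_\a f)$ one gets $Qq=\sum_\a\t_\a q=q$ for $q\in\Pi_k^d$, and interchanging the sums,
$$ Qf(x)=\sum_{\a\in A}\t_\a(x)\sum_{\xi\in\Lambda_\a}f(\xi)\,w_{\a,\xi}(x)=\sum_{\b\in A}f(\b)\,N_\b(x),\qquad N_\b:=\sum_{\a:\,\b\in\Lambda_\a}\t_\a\,w_{\a,\b}. $$
Since $\b\in\Lambda_\a$ forces $\|\a-\b\|\le R$, and at most $C(c_0,C_1,d,R)$ such $\a$ exist, $N_\b$ is a finite sum of terms $\t_\a w_{\a,\b}$, each supported in $B_{2c_0}(\a)\subset B_{R+2c_0}(\b)$; choosing $M:=2R+4c_0$ gives $\operatorname{supp}N_\b\subset B_{M/2}(\b)$ and $N_\b\in C_c^k(\RR^d)$. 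If moreover $\|w_{\a,\xi}\|_{L^\infty(B_R(\a))}\le C(c_0,C_1,k,d)$, then --- each $w_{\a,\xi}$ having degree $\le k$ and $\operatorname{supp}\t_\a\subset B_R(\a)$ --- Markov's inequality bounds all derivatives of $w_{\a,\xi}$ up to order $k$ on $\operatorname{supp}\t_\a$, and Leibniz' rule together with the bounds on $\t_\a$ yields $\|D^\g N_\b\|_\infty\le C$ for $|\g|\le k$. Combined with the identity $Qq=q$, $q\in\Pi_k^d$, this is exactly the conclusion of the lemma, with $M$ and $C$ depending only on $k,d$ and the geometric constants.

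The one step that is not bookkeeping --- and the one I expect to be the main obstacle --- is the uniform local conditioning: $R$ must be chosen so large, depending only on $c_0,C_1,k,d$, that for every $\a$ the set $\Lambda_\a$ is $\Pi_k^d$--determining and the dual functions $w_{\a,\xi}$ are bounded on $B_R(\a)$ by a constant with the same dependence. I would derive this from a Markov--inequality and cube--counting estimate. After translating $\a$ to $0$ and rescaling $B_R(0)$ to $B_1(0)$ (a linear bijection of $\Pi_k^d$ onto itself), it amounts --- via elementary linear algebra on the finite--dimensional space $\Pi_k^d$, i.e.\ a lower bound on the smallest eigenvalue of the relevant Gram matrix --- to the estimate $\sum_{\xi\in E}|q(\xi)|^2\gtrsim\card E$ for every $q\in\Pi_k^d$ with $\|q\|_{L^2(B_1)}=1$, where $E$ denotes the rescaled copy of $\Lambda_\a$. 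Partitioning $B_1$ into $\asymp(\ve')^{-d}$ cubes of side comparable to $\ve':=c_0/R$, the covering hypothesis on $A$ puts a point of $E$ into each cube, while the overlap hypothesis gives $\card E\lesssim(\ve')^{-d}$; since the equivalence of the $L^2(B_1)$-- and $L^\infty(B_1)$--norms on $\Pi_k^d$ together with Markov's inequality bounds the oscillation of $|q|^2$ on each cube by $O(\ve')$, comparing $\sum_{\xi\in E}|q(\xi)|^2$ with $\int_{B_1}|q|^2=1$ delivers the bound once $\ve'$ is small, i.e.\ once $R$ exceeds a constant multiple of $c_0$ depending on $k,d$. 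Here both hypotheses on $A$ are genuinely used --- density, to fill the cubes and in particular to make $\Lambda_\a$ determine $\Pi_k^d$; the overlap bound, to keep $\card\Lambda_\a$, and with it the number of terms in each $N_\b$, uniformly bounded --- and taking a least--squares fit over all of $\Lambda_\a$, rather than selecting an actual unisolvent subset, is the device that makes the argument insensitive to the (non-uniform) minimal separation of the points of $A$.
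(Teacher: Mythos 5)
Your construction is sound and, as far as I can check, complete in all essentials: the volume count deducing a uniform bound on $\card\bigl(A\cap B_\rho(x)\bigr)$ from the overlap hypothesis, the smooth partition of unity $\t_\a=\phi_\a/\Phi$ with $\Phi\ge 1$, the polynomial reproduction $Qq=\sum_\a\t_\a P_\a q=q$, the support and Leibniz/Markov bookkeeping, and above all the norming-set estimate $\sum_{\xi\in E}|q(\xi)|^2\gtrsim\card E$ for unit vectors $q$ of $\Pi_k^d$ in $L^2(B_1)$, which is exactly what is needed to make the local Gram matrices uniformly invertible and the dual polynomials $w_{\a,\xi}$ uniformly bounded (the only omitted routine point is that the cubes tile an inscribed cube of $B_1$ rather than all of $B_1$, which is absorbed by norm equivalence on $\Pi_k^d$). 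Be aware, however, that the paper does not prove the lemma at all: it cites Lemma~3 of \cite{BDL}, where the $N_\a$ are built as compactly supported multivariate splines obtained by blending local Lagrange-type polynomial reproductions based on selected unisolvent subsets of $A$ near each $\a$. Your route is in the same spirit (localise, reproduce polynomials locally, blend with a partition of unity) but replaces the selection of unisolvent subsets by a moving least-squares fit over \emph{all} centres in $B_R(\a)$; as you note, this is precisely what makes the argument insensitive to clustering and reduces the whole difficulty to one clean eigenvalue bound, at the price that your $N_\a$ are smooth bump-function combinations rather than piecewise polynomials. That loss is harmless here: the only later use of the spline structure is the remark that $N_\a(x)=N(x-\a)$ when $A$ agrees with $\ZZ^d$ near $\a$, and your construction is equally local and translation-covariant (one merely has to enlarge the radius of agreement from $M/2$ to a fixed multiple, since your $N_\a$ depends on $A\cap B_{cR}(\a)$ through both $\Phi$ and the least-squares weights). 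Finally, note that your $M$ and $C$ depend on the geometric constants $c_0$, $C_1$ as well as on $k$ and $d$; this is consistent with the paper's stated convention that all constants may depend on $c_0$ and $C_1$.
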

 According to the proof in \cite[Lemma 3]{BDL},
 each  function $N_\a$  in the above lemma may be a compactly supported  multivariate spline
 depending  only on $d$, $k$ and the points in the set $A\cap B_{M/2} (\a)$.   In particular, if $A\cap B_{M/2}(\a)=\ZZ^d \cap B_{M/2}(\a)$ for some $\a\in A$, then
we write $N_\a(x)=N(x-\a)$ for a compactly supported multivariate spline $N\in C_c^k (\RR^d)$ independent of the set $A$.

  Let now $\vi: \RR^d\to \RR$ be a continuous function. We require this $\vi$ to be of at most polynomial growth so that it can be viewed as a tempered
distribution or so-called generalised function. Therefore it has a  distributional Fourier
transform $\wh{\vi}$ which we specifically require to be of the form
$\wh{\vi}(\xi)=(F(\xi)+bG_0(\xi)\log\|\xi\|) /G(\xi)$
 for $\xi$ near the origin, say for  $\xi\in B_1(0)$.
Here $b$ is a constant, $G_0$ is a homogeneous polynomial of degree $n_0\geq0$. Furthermore
$F$ and $G$ are real-valued  functions on the ball $B_1(0)$
satisfying the following conditions:
\begin{enumerate}
\item[(2A)] $G$ is a homogeneous polynomial of positive even degree $2m_1$ and $G(\xi)\neq 0$ for all $\xi\in\RR^d\setminus\{0\}$.
    \item[(2B)] $F\in C^{m_0}(B_1(0))\cap C^{m_0+d+1}(B_1(0)\setminus\{0\})$ for some nonnegative integer $m_0$,  $F(0)\neq 0$,   and  there exists  $\t\in (0,1]$  such that for   all  $\ga\in\ZZ_+^d$ with $|\g|\leq m_0+d+1$,
        \begin{equation}\label{2-1-3}\Bigl|D^\ga \bigl[F(\xi)-\tau_{m_0}(\xi)\bigr]\Bigr|
\leq C \|\xi\|^{m_0+\t -|\ga|},\ \   \text{as  $\|\xi\|\to 0_+$},\end{equation}
        where $\tau_{m_0}$ denotes the Taylor polynomial of $F$ of degree $m_0$ about $\xi=0$.
      \end{enumerate}
 In addition to the  conditions (2A) and (2B), we also assume  the following  condition  on   derivatives of  $\wh{\vi}(\xi)$ for large $\|\xi\|$: \begin{enumerate}
    \item[(2C)] The Fourier transform $\wh{\vi}$ of $\vi$ is $(m_0+d+1)$-times continuously differentiable  on the domain $\{\xi\in\RR^d:\  \|\xi\|>\f14\}$,  and
             \begin{equation}\label{1-8-3}\max_{|\g|\leq m_0+d+1}\int_{\|\xi\|\ge \f14} |D^\g \wh{\vi}(\xi)|\, d\xi\leq C<\infty.
            \end{equation} \end{enumerate}

As in   \cite[p. 245]{BDL}, we introduce the following definition:
\begin{defn}  Let $T_{m_0}$ denote the Taylor polynomial of the function $1/F$ of degree $m_0$ at the origin, and let $P(x)=T_{m_0}(x) G(x)$.
For each $\a, \b\in A$, define
\begin{equation}\label {2-3-0-0}\mu_{\a\b} := \int_{\RR^d} N_\a(x) P(iD) N_\b (x)\, dx,\end{equation}
where   $\{N_\a\}_{\a\in A}$  is the sequence of functions  given in Lemma \ref{lem-2-1} with $k=2m_1+m_0+1$.
\end{defn}

From the above  definition and Lemma \ref{lem-2-1},   it is clear that
 $\mu_{\a\b}=0$ if $\|\a-\b\|\ge M$, $\mu_{\a \b} =\overline{\mu_{\b\a}}$ for all $\a,\b\in A$, where $M$ is  the positive constant given in Lemma \ref{lem-2-1} with $k=2m_1+m_0+1$.
More importantly,  the sequence $\{\mu_{\a\b}\}$  has the following important property, as was shown in \cite[Theorem 7]{BDL}:
\begin{equation}\label{3-16}\sum_{\b\in A} \mu_{\a\b} q(\b)=0, \    \  \forall \a\in A,\   \
  \forall q\in \text{ker}\, G(D) \cap \Pi_{2m_1+m_0+1}^d.\end{equation}
  Furthermore, in the case when
  $A=\ZZ^d$,  \begin{equation}\label{2-5-26}\mu_{\a\b}=\mu_{\b-\a}=\int_{\RR^d}
    N(y-\b+\a) P(iD) N(y)\,
    dy,\    \    \   \a,\b\in\ZZ^d,\end{equation}  and 
  the sequence satisfies (\cite{BuhmJAT, BDL})
  \begin{equation}\label{2-5-1-1}
  \sum_{j\in\ZZ^d} \mu_j q(j) =P(iD) q(0),\    \     \    \forall q \in \Pi_{2m_1+m_0+1}^d.
  \end{equation}
  The expression on the left-hand side of \eqref{3-16} or \eqref{2-5-1-1} is in fact  a  local   difference of the polynomial $q$
at  the point $\a\in A$ which is closely related to the derivative $P(iD) q(\a)$.

A remarkable fact is that the above defined  sequence
 admits  the mentioned quasi-interpolation, central and useful for our application.
 More precisely,  under the above  assumptions on the Fourier transform  $\wh{\vi}$,  the following result \cite{BD1} holds:\\

{\bf Theorem A.}  {\it  Under the above  assumptions,  the functions $\psi_\a$, $\a\in A$, defined by
\begin{equation}\label{2-4-rev}\psi_\a(x):= \sum_{\b\in A} \mu_{\a\b} \vi(x-\b),\   \ x\in\RR^d,\end{equation}
 have the decaying property
    \begin{equation}\label{1-1}
    |\psi_\a (x) |\leq C ( 1+\|x-\a\|)^{-d-m_2},\   \  x\in\RR^d,\    \    \a\in A,
    \end{equation}
    where
    \begin{equation}\label{2-4-3}m_2:=\begin{cases}m_0+\t,&\  \  \text{if $b=0$};\\
    \min\{m_0+\t, n_0\},&\   \  \text{if $b\neq 0$};
    \end{cases}
    \end{equation}
     and furthermore,
 the  operator defined by (``quasi-interpolation'')
\begin{equation}\label{1-2}Qf:=\sum_{\a\in A} f(\a) \psi_\a
\end{equation} reproduces all polynomials of total degree
        at most $\ell_0:=\min\{m_0, 2m_1-1\}$; that is, $Q p=p$ for all $p\in \Pi_{\ell_0}^d$.} \\

Here and elsewhere in the  paper,  the letter $C$ denotes a constant that  may  depend  on $m_1, m_0$, $n_0$, and $d$. 

\begin{rem}\begin{enumerate}[\rm (i)]
\item
We would like to remark that, in fact,
in the case of $b=0$, \mbox{Theorem A }   with slightly weaker  decaying estimates of the functions  $\psi_\a$   was proved in \cite{BDL},   where  \eqref{1-1} with $m_0$ in place of $m_2$  was  shown.
 Infinite differentiability on
$\RR^d\setminus\{0\}$   and  slightly stronger pointwise estimates of the derivatives  $D^\g\wh{\vi}(\xi)$ for large $\xi$ were required in \cite{BDL}, but  by looking carefully at the proofs there, the
assumptions can
be slightly reduced as we did above.
Concretely, the aforementioned differentiability assumptions are used
in that paper \cite{BDL} to obtain the correct asymptotic behaviour of
the inverse Fourier transform of the remainder term $\hat
r_{\ell+1+\theta} (\omega)$. 
For this, no infinite differentiability is
  required as indeed the said inverse Fourier transform need not decay
  of arbitrary high negative order but only at the same rate as the
  rest of the expression.

\item Here an   additional term $b\log \|\xi\|$ has been added  in the Fourier transform of $\vi$  in order to  include the ``shifted'' versions   of some  important radial basis functions,  where   a direct  application of the general results of \cite[Theorem 8]{BDL} normally
would  not yield  the optimal decaying estimate \eqref{1-1} .
In fact,  Theorem A of \cite{BD1} for  $b\in\CC$,  $A=\ZZ^d$ and $G(x)=\|x\|^{2m}$
was   proved   in \cite{BuhmJAT}.
\end{enumerate}
\end{rem}

The combination of localisation of the operator (by the decaying
properties of $\psi_\alpha$) and the polynomial reproduction admits
the derivation of approximation orders to sufficiently smooth
approximands. This requires, of course, a scaling of the given centres.

Indeed, given $h>0$, the following quasi-interpolation operator with scaling was introduced and studied in \cite{BDL}:
\begin{equation}\label{1-10-b}
Q_h f(x):=\sum_{\a\in A} f(h\a) \psi_\a ( h^{-1} x),\
 \ x\in \RR^d.\end{equation}

%

For the rest of the paper, we will set $\ell:=\min\{ m_0-1, 2m_1-1\}$. Observe that condition \eqref{1-2} implies that we have the
    desirable polynomial reproduction property
          $$ Q_h q=q,\     \  \forall q\in \Pi_{\ell}^d.$$
      The following estimates were proved  in \cite{BDL} for the case of  $b=0$: If $f\in C^{\ell+1}(\RR^d)$ has bounded partial derivatives of total orders $\ell$ and $\ell+1$, then for any $h\in (0,1)$,
    \begin{equation}\label{0-3}
     \|f-Q_h f \|_\infty \leq  C_f h^{\ell+1}|\log h|,\end{equation}
     where $$C_f:=\|\nabla^{\ell} f\|_{L^\infty (\RR^d)} +\|\nabla^{\ell+1} f\|_{L^\infty(\RR^d)}.$$
     \begin{rem}
     These estimates are already reformulated slightly to suit the
     analysis in this paper below, and in particular, here and in what
     follows,  the notation $\nabla^k f$  denotes the vector with
     components $D^\a f$, $|\a|=k$, for a given positive integer
     $k$. For convenience, we also set $\nabla^0 f=f$.
\end{rem}
\begin{rem}\label{rem-2-3}
Due to the slightly faster decay \eqref{1-1}, we can easily  prove, whenever $b=0$, that neither the log term in \eqref{0-3}, nor the term $\|\nabla^{\ell} f\|_{L^\infty (\RR^d)}$ in $C_f$    is needed.
\end{rem}

      Using \eqref{1-1}, and a straightforward
      calculation, we can deduce the following useful estimates  which
      shall be employed in the next section:
      \begin{prop}\label{prop-1-1} Let $A$ be a quasi-uniformly distributed subset of $\RR^d$, and let $h\in (0,1)$.  Then for $0\leq a<m_2$,
      \begin{equation}\label{1-4-i}\sup_{x\in\RR^d} \sum_{\a\in A} |\psi_\a (x/h)| (\|x-\a h\|+h)^a \leq C_a h^a,\end{equation}
      and for  any $\va>0$,
      $$\sup_{x\in\RR^d} \sum_{\a\in A} |\psi_\a (x/h)| (\|x-\a h\|+h)^{m_2} (1+\|x-\a h\|)^{-\va}\leq C_\va h^{m_2}|\log h|.$$
      \end{prop}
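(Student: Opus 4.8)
The plan is to eliminate the scaling by the substitution $y=x/h$, under which $\|x-\a h\|=h\|y-\a\|$. The first sum then becomes $h^a\sum_{\a\in A}|\psi_\a(y)|(1+\|y-\a\|)^a$ and the second becomes $h^{m_2}\sum_{\a\in A}|\psi_\a(y)|(1+\|y-\a\|)^{m_2}(1+h\|y-\a\|)^{-\va}$. Inserting the decay estimate \eqref{1-1} for $\psi_\a$ reduces both assertions to uniform bounds, over all $y\in\RR^d$ and all admissible $A$, for the purely geometric sums
$$S_1(y):=\sum_{\a\in A}(1+\|y-\a\|)^{-d-(m_2-a)}\quad\text{and}\quad S_2(y):=\sum_{\a\in A}(1+\|y-\a\|)^{-d}(1+h\|y-\a\|)^{-\va},$$
namely $S_1(y)\le C_a$ and $S_2(y)\le C_\va|\log h|$.

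The single ingredient needed is a counting estimate for a quasi-uniformly distributed set: from $\RR^d=\cup_{\a\in A}B_{c_0}(\a)$ together with $\sum_{\a\in A}\newchi_{B_{c_0}(\a)}\le C_1$ one obtains, by comparing Lebesgue measures, $\card\,(A\cap B_R(y))\le C(1+R)^d$ for every $y\in\RR^d$ and $R>0$, with $C$ depending only on $c_0,C_1,d$. Now decompose $A$ into the dyadic shells $\{\a:\ 2^{j}\le 1+\|y-\a\|<2^{j+1}\}$, $j\ge0$: each shell carries at most $C2^{jd}$ points, so the contribution of the $j$-th shell to $S_1(y)$ is at most $C2^{jd}2^{-j(d+m_2-a)}=C2^{-j(m_2-a)}$. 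Since $a<m_2$, summing this convergent geometric series gives $S_1(y)\le C_a$ uniformly in $y$, which is \eqref{1-4-i}.

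For $S_2(y)$ the same shell decomposition is used, but now the index set is split at the scale $2^{j_0}\sim 1/h$. For $j\le j_0$ the extra factor $(1+h\|y-\a\|)^{-\va}$ is bounded by $1$ and the $j$-th shell contributes $C2^{jd}2^{-jd}=C$; since there are only $O(\log(1/h))$ such shells, this part is $O(|\log h|)$. For $j>j_0$ one uses $(1+h\|y-\a\|)^{-\va}\le (h2^{j})^{-\va}$, so the $j$-th shell contributes $C2^{jd}2^{-jd}(h2^{j})^{-\va}=Ch^{-\va}2^{-j\va}$, and summing the geometric series from $j=j_0$ yields a bound $Ch^{-\va}(1/h)^{-\va}=C$. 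Hence $S_2(y)\le C_\va|\log h|$ uniformly in $y$, which is the second assertion.

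This computation contains no serious difficulty; it is essentially bookkeeping. The only points requiring care are keeping the $h$-dependence explicit throughout and choosing the splitting scale $j_0\sim\log_2(1/h)$ so that the near and far parts of $S_2$ are controlled simultaneously — the near part is exactly where the logarithmic loss $|\log h|$ is forced, and the far part is tamed precisely by the damping factor $(1+\|x-\a h\|)^{-\va}$. The uniform counting bound $\card\,(A\cap B_R(y))\le C(1+R)^d$ is what makes everything work with constants independent of the individual set $A$, depending only on its geometric constants $c_0,C_1$.
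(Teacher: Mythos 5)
Your proof is correct, and since the paper gives no proof of Proposition~\ref{prop-1-1} (it is stated as following from \eqref{1-1} ``by a straightforward calculation''), your rescaling $y=x/h$, the counting bound $\card\,(A\cap B_R(y))\leq C(1+R)^d$ from quasi-uniformity, and the dyadic-shell summation (split at $2^{j_0}\sim 1/h$ for the second sum, which is exactly where the $|\log h|$ arises) constitute precisely the intended argument.
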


\section{Pointwise approximation by  quasi-interpolation for functions of lower smoothness  in $L^p$}
\setcounter{equation}{0}

Error estimates where the approximands are rougher, i.e.\ from larger
function spaces with less smoothness required, than the approximating
(radial) basis functions or quasi-interpolants are essential in practical applications
because we cannot in general guarantee high order smoothness of the
possibly unknown functions which are to be approximated (i.e., they
are only known e.g.\ at a finite number of points).
On the other hand, however,   functions of  lower smoothness in $L^p$  (say,  for  instance, $f\in W_p^r$ with $r<d/p$)  are defined only almost everywhere on $\RR^d$,
and pointwise evaluation functionals make no sense for such
functions. Thus,  the quasi-interpolation  operators $Q_h$ considered
in the last two sections  are  no longer  applicable to functions of
lower smoothness in $L^p$. In  this section, we will modify
  the definition of    $Q_h f$  so that it is defined for every locally integrable function.  It turns out that
 such  modified operators, although not as convenient for
 practical  implementation,  is easier to handle.  In fact, we will
 establish stronger pointwise error estimates in this section, which
 will play crucial  roles in Section 4 when we consider compression in  approximation by radial basis functions.

 We start with the definition of the modified quasi-interpolation.
 First, consider the linear functional $S_0: \Pi_\ell^d\to \RR$ given by $S_0 P=P(0)$.
Since the space $\Pi_\ell^d$ is  finite dimensional, it follows that
\begin{equation*}|S_0 P|\leq \max_{y\in B_1(0)} |P(y)|\leq C_{\ell,d} \int_{B_1(0)} |P(y)|\, dy, \   \      \forall P\in\Pi_\ell^d.\end{equation*}
By the Hahn-Banach theorem,  this linear functional can be extended to
a bounded linear functional on the space $L^1(B_1(0))$  such that $S_0 P=P(0)$ for all $P\in\Pi_\ell^d$, and
\begin{equation}\label{4-1}|S_0 f|\leq C  \int_{B_1(0)}|f(y)|\, dy,\   \ \forall f\in L^1(B_1(0)).\end{equation}
For each $\a\in A$, we define
$$S_\a f:= S_0 \Bigl( f(\cdot+\a)\Bigr),\   \   f\in L^1(B_1(\a)).$$
 Hence, $S_\a P=P(\a)$ for all $P\in\Pi_\ell^d$, and by \eqref{4-1},
\begin{equation}\label{4-2-0}|S_\a f|\leq C \int_{B_1(\a)} |f(y)|\, dy,\   \  \forall f\in L^1(B_1(\a)),\   \  \forall \a\in A.\end{equation}
Now for $h>0$ and every locally  integrable function $f$ on $\RR^d$,  we define the modified quasi-interpolation  $\wt{Q}_h f$  by
\begin{equation}\label{4-2}\wt{Q}_h f(x):=\sum_{\a\in A}
f^h(\a)  \psi_\a (x/h),\  \ x\in\RR^d,\end{equation}
where $f^h(\a):=  S_{\a } (\s_h f)$ and $\sa_h f(\cdot)=f(h\cdot)$.

There are many smoothing techniques to construct the coefficients $f^h(\a)$  in \eqref{4-2} explicitly via integrals. 
Next, we give several examples of these techniques. 

 \begin{exam}Let $\{ \phi_j\}_{j=0}^{\text{dim}\, \Pi_\ell^d}$ be  an  orthonormal basis of $\Pi_\ell^d$ with respect to the inner product of $L^2(B_1(0))$, and let
 $\Phi_\ell (y)=\sum_{j=0}^{\text{dim}\, \Pi_\ell^d} \phi_j(y) \phi_j(0)$.  Define
$S_0 f :=\int_{B_1(0)} f(y)\Phi_\ell (y)\, dy$ for   $ f\in L^1(B_1(0)). $
Accordingly, we have
$$f^h(\a):= h^{-d }\int_{B_h(\a h)} f(y)\Phi_\ell (h^{-1}y-\a)\, dy,\    \     \  h>0, \    \   \  \a\in A.$$
\end{exam}

\begin{exam}
 Let $N\in C^{2\ell+1}(\RR^d)$ be a compactly supported multivariate spline  with $\text{supp} \, N \subset B_{c_1}(0)$ for some $c_1>0$ such that
$$\sum_{j\in\ZZ^d} p(j) N(x-j)=p(x),\   \  \forall p\in \Pi_\ell^d.$$
The Strang-Fix condition on $N$ then  implies that (see, for instance, \cite{BDL})
$$p(0)=\int_{\RR^d} p(x) N(x)\, dx,\   \  \forall p\in\Pi_\ell^d.$$
Thus, we can define
$$S_0(f):=\int_{B_1(0)} f(x) N_{c'}( x)\, dx,\   \  \forall f\in L^1(B_1(0)),$$
where  $c'=c_1^{-1}$ and  $N_{\va}(\cdot)=
\va^{-d} N(\va^{-1}\cdot)$ for $\va>0$.
Accordingly,
$$f^h(\a):= h^{-d }\int_{B_h(\a h)} f(y) N_{c'}(h^{-1}y-\a)\, dy,\    \     \  h>0, \    \   \  \a\in A.$$

\end{exam}

The  next example is  due to  Jia and Lei  \cite{JL}, who also obtained the  order of $L^p$-approximation  by the associated quasi-interpolation
with   $A=\ZZ^d$.
\begin{exam}
 Let  $\eta\in C_c^\infty(\RR^d)$ be such that $\text{supp}\  \eta \subset B_1(0)$  and $\int_{\RR^d}\eta(x)\, dx =1$.
 Define
 $$S_0 f:=\int_{B_1(0)} ( f(0) -\tr_u^{\ell+1} f(0) )\eta(u)\, du,$$
 where $\tr_u^k$ denotes the $k$-th order difference operator with  step $u$.  Since $ \tr_u^{\ell+1} P=0$ for all $P\in \Pi_\ell^d$, $S_0$ clearly satisfies the conditions stated above.
 In this case,  the $f^h$  can be   expressed   explicitly as   in
 (5.1) in \cite{BD1} with $k=\ell+1$: viz.
\begin{equation}
f^h(x) :=h^{-d}\int_{\RR^d} ( f(x) -\tr_u^k f(x) )\eta(u/h)\, du, \  \ x\in \RR^d,\end{equation}
where $\tr_u^k$ denotes the $k$-th order difference operator given by
$$\tr_u^k f(x): =\sum_{j=0}^k (-1)^j \binom{k}{j} f(x-ju),\   \ x\in\RR^d.$$

 \end{exam}

The main goal in this section is to prove the following pointwise error estimate:

  \begin{thm}\label{thm-6-4-18}Let
 $f\in C^r(\RR^d)$,   $1\leq r\leq \ell+1$  and $x\in\RR^d$.
  \begin{enumerate}[\rm (i)]
 \item If $r < m_2$, then for all $h>0$,
 \begin{equation}\label{6-4-18}
 |f(x)-\wt{Q}_hf(x)|\leq C
    h^r\bl( \|\nabla^r f\|\ast \Psi_h\br)(x)
    \end{equation}
    where
    $\Psi(z):=\|z\|^{-d+1}(1+\|z\|)^{-1-m_2+r}$
  and $\Psi_h(\cdot)=h^{-d} \Psi(h^{-1}\cdot)$.
  \item  If $r = m_2$, then for  $h\in (0,1)$,
  \begin{align}
   |f(x)-\wt{Q}_hf(x)| \leq &C h^r\Bl[(\|\nabla^r f\|\ast \Phi^h)(x) +   \|\nabla ^{r-1} f(x)\|\Br]\label{6-5-12}
      \end{align}
  where
  $$\Phi^h (z):=\|z\|^{-d+1} (\|z\|+h)^{-1} (1+\|z\|)^{-1}.$$


 \end{enumerate}

 \end{thm}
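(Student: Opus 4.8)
\textbf{Proof strategy for Theorem \ref{thm-6-4-18}.}
The plan is to fix $x\in\RR^d$ and exploit the polynomial reproduction $\wt Q_h q=q$ for $q\in\Pi_\ell^d$ together with the decay estimate \eqref{1-1}. First I would subtract off the Taylor polynomial $T_x^{r-1}f$ of $f$ of degree $r-1$ at $x$. Since $r-1\le \ell$, the modified operator reproduces it exactly, even in the averaged sense: one checks that $f^h(\a)$ applied to $T_x^{r-1}f$ returns $(T_x^{r-1}f)(h\a)$ because $S_\a$ annihilates the relevant polynomial differences (each $S_\a$ agrees with point evaluation on $\Pi_\ell^d$), so that
\begin{equation*}
f(x)-\wt Q_h f(x)=\bl(f-T_x^{r-1}f\br)(x)-\sum_{\a\in A}\bl(\sigma_h(f-T_x^{r-1}f)\br)^h(\a)\,\psi_\a(x/h).
\end{equation*}
The first term on the right vanishes, so everything is reduced to estimating the tail sum with $g:=f-T_x^{r-1}f$, which satisfies $g(x)=0$ and, by Taylor's theorem with integral remainder, $|g(y)|\le C\|y-x\|^{r-1}\int_0^1\|\nabla^{r-1}f(x+t(y-x))-\nabla^{r-1}f(x)\|\,dt$, or more crudely $|g(y)|\le C\|y-x\|^{r-1}\sup_{B}\|\nabla^{r-1}f-\nabla^{r-1}f(x)\|$ along the segment; since we want $\nabla^r f$ in the final bound I would instead write $g(y)=\sum_{|\g|=r}\frac{r}{\g!}(y-x)^\g\int_0^1(1-t)^{r-1}D^\g f(x+t(y-x))\,dt$, giving the pointwise bound $|g(y)|\le C\|y-x\|^{r}\int_0^1\|\nabla^r f(x+t(y-x))\|\,dt$ (up to a harmless weight $(1-t)^{r-1}$).

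Next I would insert the bound on $|f^h(\a)|$: from \eqref{4-2-0} and $\sigma_h$, $|(\sigma_h g)^h(\a)|=|S_\a(\sigma_h g)|\le C h^{-d}\int_{B_h(\a h)}|g(y)|\,dy$. Combining with $|\psi_\a(x/h)|\le C(1+\|x/h-\a\|)^{-d-m_2}=C(1+h^{-1}\|x-\a h\|)^{-d-m_2}$, and summing over $\a$, the sum over $\a$ of the localized weights $h^{-d}\newchi_{B_h(\a h)}(y)(1+h^{-1}\|x-\a h\|)^{-d-m_2}$ is, by quasi-uniformity of $A$, comparable to $h^{-d}(1+h^{-1}\|x-y\|)^{-d-m_2}$ — this is exactly the kind of calculation behind Proposition \ref{prop-1-1}. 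Hence
\begin{equation*}
|f(x)-\wt Q_h f(x)|\le C\int_{\RR^d}|g(y)|\,h^{-d}\bl(1+h^{-1}\|x-y\|\br)^{-d-m_2}\,dy.
\end{equation*}
Now plug in $|g(y)|\le C\|y-x\|^{r}\int_0^1\|\nabla^r f(x+t(y-x))\|\,dt$, substitute $y\mapsto x+tz'$ or change variables to move the $t$-integral outside, and the remaining kernel in the $z=x-y$ variable becomes $\|z\|^{r}h^{-d}(1+h^{-1}\|z\|)^{-d-m_2}$. For part (i), where $r<m_2$, one has $\|z\|^{r}(1+h^{-1}\|z\|)^{-d-m_2}\le C h^{?}\cdot\|z\|^{-d+1}(1+\|z\|)^{-1-m_2+r}\cdot(\text{something integrable})$ — more precisely, after scaling the bound reorganizes as $C h^r\Psi_h(z)$ with $\Psi(z)=\|z\|^{-d+1}(1+\|z\|)^{-1-m_2+r}$, where the factor $\|z\|^{-d+1}$ absorbs the degeneracy coming from integrating the $t$-parameter (the segment integral contributes a $\|z\|^{1-d}$-type singularity near the diagonal after one integrates $\nabla^r f$ over the segment and changes variables), and the condition $r<m_2$ guarantees the decay exponent $1+m_2-r>1$ keeps $\Psi$ locally integrable at infinity in the relevant sense. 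This yields \eqref{6-4-18}.

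For part (ii), $r=m_2$, the same computation produces a kernel $\|z\|^{-d+1}(\|z\|+h)^{-1}(1+\|z\|)^{-1+(r-m_2)}=\|z\|^{-d+1}(\|z\|+h)^{-1}(1+\|z\|)^{-1}=\Phi^h(z)$, where the extra $h$ in $(\|z\|+h)^{-1}$ comes from the endpoint of the scaling (one cannot gain a full extra power of $\|z\|$ once $r$ equals the decay threshold $m_2$), precisely as in the log-type borderline already visible in the second estimate of Proposition \ref{prop-1-1}. However, $\Phi^h$ is \emph{not} integrable near infinity as a function alone — $\int_{\|z\|\ge 1}\Phi^h\,dz=\infty$ — so one cannot simply bound the convolution $\|\nabla^r f\|\ast\Phi^h$ by $\|\nabla^r f\|_\infty\|\Phi^h\|_1$; instead the statement keeps the convolution as is, which is fine, but the far-field part of the integral, where $\|x-y\|\gtrsim 1$, must be handled by re-Taylor-expanding only to degree $r-1$ (not $r$): there $|g(y)|\le C\|y-x\|^{r-1}(\|\nabla^{r-1}f(x)\|+\|\nabla^{r-1}f(y)\|)$, and the $\|\nabla^{r-1}f(x)\|$ piece, against the kernel $\|z\|^{r-1}h^{-d}(1+h^{-1}\|z\|)^{-d-m_2}$ with $r-1<m_2$, integrates to $Ch^{r-1}\|\nabla^{r-1}f(x)\|$, i.e. $Ch^{r}\cdot h^{-1}\|\nabla^{r-1}f(x)\|$ — wait, that is $Ch^{r-1}$, which matches the claimed term $C h^r\|\nabla^{r-1}f(x)\|$ only after one observes the statement has $h\in(0,1)$ and the term is actually $h^{r-1}$-order; re-reading \eqref{6-5-12}, the factor is $h^r$ multiplying $\|\nabla^{r-1}f(x)\|$, so I must be more careful and keep the $\nabla^{r-1}f(x)$ contribution bundled via a first-order Taylor step that produces one more power of $h$ from the vanishing of lower-order terms — this bookkeeping is the main obstacle. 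Concretely, the hard part will be showing that the boundary/far-field correction is genuinely $O(h^r\|\nabla^{r-1}f(x)\|)$ and not merely $O(h^{r-1}\|\nabla^{r-1}f(x)\|)$; this requires splitting $\RR^d=\{\|x-y\|\le 1\}\cup\{\|x-y\|>1\}$, using the degree-$r$ Taylor expansion (hence $\nabla^r f$) on the near part to get the $\Phi^h$-convolution, and on the far part using that $\psi_\a$ reproduces $\Pi_\ell^d$ and the decay $(1+h^{-1}\|x-\a h\|)^{-d-m_2}$ with $m_2=r$, together with a careful accounting of the scaling powers, so that summation over the far indices loses only $h^{-d}$ against the volume $h^d$ and contributes the stated $h^r$-weighted gradient term. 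The rest — verifying $\Psi$ and $\Phi^h$ are as written and that all constants depend only on the allowed parameters — is routine once the splitting and the quasi-uniformity summation lemma (the mechanism of Proposition \ref{prop-1-1}) are in place.
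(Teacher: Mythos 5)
Your skeleton coincides with the paper's: use polynomial reproduction to write $f(x)-\wt Q_h f(x)$ as $\sum_{\a}\bigl(S_\a(\sigma_h f)-T_x^{r-1}f(\a h)\bigr)\psi_\a(x/h)$, bound $|S_\a(\sigma_h f)-T_x^{r-1}f(\a h)|$ by the average of the Taylor remainder over $B_h(\a h)$ via \eqref{4-2-0}, and sum using \eqref{1-1} and quasi-uniformity. Where you diverge is the treatment of the remainder and of the sum: the paper invokes a Riesz-potential form of the Taylor remainder (Theorem 3.1 of \cite{BD1}, with kernel $\|z-u\|^{1-d}$ localized to $B_{2\|x-y\|}(x)$), splits the resulting expression into $\Sigma_1+\Sigma_2$, and estimates $\Sigma_2$ by decomposing $A$ into dyadic annuli about $x/h$; you instead use the segment-integral remainder and pass directly to the convolution with $\Psi_h$ by a change of variables in the segment parameter $t$. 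For part (i) your route does work: the $t$-integration produces exactly the $\|z\|^{1-d}$ singularity and the $(1+\|z\|/h)^{-1-m_2+r}$ decay that assemble into $h^r\Psi_h$ (one needs $r<m_2+1$ for the $t$-integral to converge, which holds), and the argument is arguably more self-contained than the paper's appeal to two external lemmas from \cite{BD1}.

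Part (ii) is where there is a genuine gap, which you flag but do not close. At $r=m_2$ your kernel computation gives a far-field behaviour of order $h^r\|z\|^{-d}$, which is not integrable at infinity, so the single-convolution bound fails and one must re-expand only to order $r-1$ on $\{\|x-y\|\gtrsim 1\}$; you announce this splitting but the bookkeeping that yields $Ch^r\|\nabla^{r-1}f(x)\|$ (rather than $Ch^{r-1}$, or a term involving $\sup\|\nabla^{r-1}f\|$ away from $x$, neither of which is dominated by the right-hand side of \eqref{6-5-12}) is exactly what is missing; your fallback bound $\|y-x\|^{r-1}(\|\nabla^{r-1}f(x)\|+\|\nabla^{r-1}f(y)\|)$ does not resolve the $\|\nabla^{r-1}f(y)\|$ piece. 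Two smaller corrections: $\Phi^h$ \emph{is} integrable at infinity, since $\Phi^h(z)\sim\|z\|^{-d-1}$ there; it is the factor $(\|z\|+h)^{-1}$ near the origin that makes $\|\Phi^h\|_1\sim|\log h|$. And note the paper itself writes out only the case $r<m_2$, asserting that $r=m_2$ follows with ``slight modifications''; your own computation shows these modifications are not merely cosmetic, so if you pursue this route you should supply the far-field argument explicitly.
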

A straightforward   calculation shows that  the convolutions on the
right hand sides of  both \eqref{6-4-18}  and \eqref{6-5-12}  can be
dominated by the Hardy-Littlewood maximal functions of some
derivatives of $f$. Thus, from Theorem \ref{thm-6-4-18},  we deduce
the following useful  corollary, which  will play a crucial role in
Section 4 when  we consider compressions  in  radial basis function
approximation. The operator $M$ used here is the Hardy--Littlewood
maximal function 
$$ Mf(x):=\sup_{\delta>0}{1\over|B_\delta(x)|}\int_{B_\delta(x)}|f(z)|\,dz.$$

\begin{cor}\label{cor-6-5-18}
Let  $r\in\NN$, $1\leq r\leq \ell+1$,  $f\in C^r(\RR^d)$, and $x\in\RR^d$.
If $r<m_2$, then
\begin{align}\label{}
    &\   \    \  |f(x)-\wt{Q}_hf(x)|\leq C h^r      M(\|\nabla^r f\|)(x), \    \    \   h>0,\label{4-15}
\end{align}
whereas if $r=m_2$, then for $h\in (0,1)$,
\begin{align*}\label{}
    &\   \    \  |f(x)-\wt{Q}_hf(x)|\leq C h^r
   \max\{ 1, |\log h|\}\Bigl(M(\|\nabla^r f\|)(x)+ \|\nabla^{r-1} f(x)\|\Bigr). \end{align*}
\end{cor}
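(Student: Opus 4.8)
The plan is to derive Theorem~\ref{thm-6-4-18} from a Taylor-expansion argument adapted to the modified operator $\wt Q_h$. The starting point is the polynomial reproduction $Q_h q = q$ for $q\in\Pi_\ell^d$ (equivalently $\wt Q_h q = q$, since $S_\a$ reproduces $\Pi_\ell^d$ pointwise), together with the decay estimate \eqref{1-1} for $\psi_\a$. Fix $x\in\RR^d$ and write, using a partition of unity or simply the fact that $\sum_\a \psi_\a(x/h)=1$ on $\Pi_0^d$,
\[
f(x)-\wt Q_h f(x) = \sum_{\a\in A}\bigl(f(x)- f^h(\a)\bigr)\psi_\a(x/h)
= \sum_{\a\in A}\Bigl( S_\a\bigl(\s_h(f - T_x^{r-1}f)\bigr) - \bigl(f-T_x^{r-1}f\bigr)(x)\Bigr)\psi_\a(x/h),
\]
where I have subtracted the Taylor polynomial $T_x^{r-1}f\in\Pi_\ell^d$ (legal since $r-1\le\ell$) and used $\wt Q_h(T_x^{r-1}f)=T_x^{r-1}f$ together with $(T_x^{r-1}f)(x)=f(x)$. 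Thus the error is a sum over $\a$ of a local average (against the bounded functional $S_0$, via \eqref{4-2-0}) of the Taylor remainder $g:=f-T_x^{r-1}f$, weighted by $\psi_\a(x/h)$.

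Next I would estimate the two pieces. The term $g(x)=0$, so what survives is $\sum_\a S_\a(\s_h g)\,\psi_\a(x/h)$. By \eqref{4-2-0}, $|S_\a(\s_h g)|\le C\int_{B_1(\a)}|g(hy)|\,dy = C h^{-d}\int_{B_h(h\a)}|g(z)|\,dz$. For the Taylor remainder of a $C^r$ function the standard integral-form bound gives, for $z\in B_h(h\a)$,
\[
|g(z)| = |f(z)-T_x^{r-1}f(z)| \le C\|z-x\|^{r}\int_0^1 \|\nabla^r f(x+t(z-x))\|\,\frac{dt}{?}
\]
— more precisely, the clean estimate $|g(z)|\le C\sum_{|\ga|=r}\int_0^1 (1-t)^{r-1}\|z-x\|^r |D^\ga f(x+t(z-x))|\,dt$, which after the change of variable $w=x+t(z-x)$ (Jacobian $\sim t^{-d}$) and integrating the one-dimensional radial parameter produces a bound of the form $C\bigl(\|z-x\|^{1-d}\ast_{\text{radially}}\|\nabla^r f\|\bigr)$ localised appropriately. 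Inserting this, using $|\psi_\a(x/h)|\le C(1+\|x/h-\a\|)^{-d-m_2}$, and summing over the quasi-uniform set $A$ (each $z$ lies in $O(1)$ of the balls $B_h(h\a)$, with $\|x-h\a\|\sim\|x-z\|$ up to additive $h$), I collapse the double sum-integral into a single convolution $h^r(\|\nabla^r f\|\ast\Psi_h)(x)$ with the kernel $\Psi(z)=\|z\|^{-d+1}(1+\|z\|)^{-1-m_2+r}$ in case (i); the exponent $-d-m_2$ from $\psi_\a$ combines with the $\|z-x\|^r$ and the $\|z-x\|^{1-d}$ from the Taylor remainder to give exactly that decay, and the condition $r<m_2$ is what makes $\Psi$ locally integrable at infinity so the convolution is finite. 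For case (ii), $r=m_2$, the same computation yields a borderline kernel $\|z\|^{-d+1}(1+\|z\|)^{-1}$ whose convolution tails diverge logarithmically; I would split the sum over $\a$ into $\|x-h\a\|\le$ (some fixed radius) and the complement, bound the far part using Proposition~\ref{prop-1-1} (second estimate, which is precisely engineered for this log loss) by $Ch^{m_2}|\log h|\,\|\nabla^{r-1}f(x)\|$-type terms after a further Taylor expansion of order $r-1$, and the near part by the convolution with $\Phi^h$; the extra summand $\|\nabla^{r-1}f(x)\|$ in \eqref{6-5-12} is the residue of this splitting. The $(\|z\|+h)^{-1}$ factor in $\Phi^h$ is the regularisation at the origin coming from the averaging radius $h$ in $S_\a(\s_h\cdot)$.

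Finally, Corollary~\ref{cor-6-5-18} follows by dominating the convolutions pointwise by the Hardy--Littlewood maximal function. For (i): $(\|\nabla^r f\|\ast\Psi_h)(x)\le C\,M(\|\nabla^r f\|)(x)$ because $\Psi_h$ is (after symmetrization/majorization) bounded above by a radially decreasing $L^1$ function of mass $O(1)$ — indeed $\Psi(z)=\|z\|^{-d+1}(1+\|z\|)^{-1-m_2+r}$ with $r<m_2$ is radially decreasing away from $0$ and integrable, and the standard lemma $|\phi\ast g|\le \|\phi\|_{L^1}\,Mg$ for radially decreasing $\phi$ applies (with the scaling $\Psi_h$ preserving the $L^1$ norm). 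For (ii) one writes $\Phi^h(z)\le \min\{h^{-1},\|z\|^{-1}\}\,\|z\|^{-d+1}(1+\|z\|)^{-1}$ and decomposes dyadically in $\|z\|$; the annuli with $\|z\|\sim 2^j h$ each contribute a bounded multiple of $M(\|\nabla^r f\|)(x)$, and summing over the $O(|\log h|)$ relevant scales (those with $2^j h\lesssim 1$) produces the factor $\max\{1,|\log h|\}$, while the term $\|\nabla^{r-1}f(x)\|$ is carried through unchanged. The main obstacle, and the only genuinely delicate point, is the bookkeeping in the collapse of $\sum_\a |\psi_\a(x/h)| \cdot h^{-d}\int_{B_h(h\a)}(\cdots)$ into a clean convolution: one must verify that replacing $\|x/h-\a\|$ by $h^{-1}\|x-z\|$ (for $z$ in the support ball) costs only a harmless constant, handle the region $\|x-z\|\lesssim h$ separately so that the singular factor $\|x-z\|^{1-d}$ of $\Psi$ is integrated against $h^{-d}\,dz$ and yields the right power of $h$, and check that the exponent arithmetic $(1-d)+r+(-d-m_2)\leftrightarrow$ the claimed kernels is exact. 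All of this is routine once the structure above is in place; I would assemble it in the order: (1) Taylor subtraction and polynomial reproduction, (2) integral remainder bound, (3) $S_\a$ estimate and change of variables, (4) summation over $A$ via quasi-uniformity plus \eqref{1-1}, (5) the $r=m_2$ splitting using Proposition~\ref{prop-1-1}, (6) maximal-function domination for the corollary.
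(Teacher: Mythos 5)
Your proposal is correct and follows essentially the same route as the paper: the error identity via subtraction of $T_x^{r-1}f$ and polynomial reproduction, the bound \eqref{4-2-0} on $S_\a$ combined with an averaged (Riesz-potential) form of the Taylor remainder (which the paper imports from Theorem~3.1 of \cite{BD1} rather than rederiving), the dyadic summation over $A$ using \eqref{1-1}, and finally the domination of the convolutions $\|\nabla^r f\|\ast\Psi_h$ and $\|\nabla^r f\|\ast\Phi^h$ by $M(\|\nabla^r f\|)$ via the standard radially-decreasing-majorant lemma, with the $O(|\log h|)$ count of dyadic scales producing the logarithm in the case $r=m_2$. This last step is exactly the ``straightforward calculation'' by which the paper deduces the corollary from Theorem~\ref{thm-6-4-18}.
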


 Corollary \ref{cor-6-5-18}  together with the usual density argument  also  implies  the following rate of pointwise convergence of  the modified quasi-interpolation.

\begin{cor}If  $1\leq p<\infty$, $w\in A_p$, $f\in W_p^r(\RR^d,w)$ and $1\leq r\leq \ell$, then for almost every $x\in\RR^d$,
$$\lim_{h\to 0} h^{-r}(f(x)-\wt{Q}_h f(x))=0.$$
\end{cor}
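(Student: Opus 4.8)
The plan is to derive the pointwise convergence from the quantitative bound in Corollary~\ref{cor-6-5-18} by a standard density argument, being careful only about the measure-theoretic subtleties coming from the weight $w\in A_p$. First I would note that for $f\in W_p^r(\RR^d,w)$ with $1\le r\le\ell$, the top-order derivatives $\nabla^r f$ lie in $L^p(\RR^d,w)$, and — since $r\ge 1$ and we have the strict inequality $r\le\ell<\ell+1$, while $m_2\ge m_0+\theta>m_0\ge\ell+1>r$ (using $\ell=\min\{m_0-1,2m_1-1\}$ and $\tau\in(0,1]$) — we are always in the regime $r<m_2$ of Corollary~\ref{cor-6-5-18}, so the clean estimate
\begin{equation}\label{start-bd}
|f(x)-\wt Q_h f(x)|\le C\,h^r\,M(\|\nabla^r f\|)(x),\qquad h>0,
\end{equation}
holds for every $x$. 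However, \eqref{start-bd} is stated for $f\in C^r(\RR^d)$, so the first real step is to make sense of $\wt Q_h f$ and of \eqref{start-bd} for general $f\in W_p^r(\RR^d,w)$: this is exactly why the \emph{modified} operator $\wt Q_h$ was introduced — the coefficients $f^h(\a)=S_\a(\sigma_h f)$ depend only on the $L^1_{\mathrm{loc}}$ equivalence class of $f$, via \eqref{4-2-0}. I would first establish \eqref{start-bd} for general $f\in W_p^r(\RR^d,w)$ by a mollification argument: approximate $f$ by smooth $f_\ve$ (e.g.\ $f_\ve=f*\rho_\ve$), apply \eqref{start-bd} to each $f_\ve$, and pass to the limit, using that $\wt Q_h$ is continuous on $L^1_{\mathrm{loc}}$ on each compact set (the sum \eqref{4-2} is locally finite because the $\psi_\a$ decay and the $f^h(\a)$ are controlled by local $L^1$ norms) and that $\nabla^r f_\ve\to\nabla^r f$ in $L^p_{\mathrm{loc}}$, hence along a subsequence $M(\|\nabla^r(f_\ve-f)\|)\to 0$ a.e.

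With \eqref{start-bd} available for $f\in W_p^r(\RR^d,w)$, the core density argument runs as follows. Fix $\ve>0$ and choose $g\in C_c^\infty(\RR^d)$ with $\|\nabla^r(f-g)\|_{L^p(w)}<\ve$; this is possible since $C_c^\infty$ is dense in $W_p^r(\RR^d,w)$ for $w\in A_p$. Write
\[
h^{-r}\bigl(f(x)-\wt Q_h f(x)\bigr)
= h^{-r}\bigl(g(x)-\wt Q_h g(x)\bigr) + h^{-r}\bigl((f-g)(x)-\wt Q_h(f-g)(x)\bigr).
\]
For the first term, $g$ is smooth and compactly supported, so by \eqref{0-3} (or directly by \eqref{start-bd} with the observation that $g\in C^{\ell+1}$ with bounded derivatives) we get $h^{-r}|g(x)-\wt Q_h g(x)|\le C_g h^{\,\ell+1-r}\to 0$ as $h\to 0$, for every $x$; here I use $r\le\ell<\ell+1$ so the exponent is positive. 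For the second term, \eqref{start-bd} gives the $h$-uniform bound $h^{-r}|(f-g)(x)-\wt Q_h(f-g)(x)|\le C\,M(\|\nabla^r(f-g)\|)(x)$. Therefore
\[
\limsup_{h\to0} h^{-r}\bigl|f(x)-\wt Q_h f(x)\bigr|\le C\,M(\|\nabla^r(f-g)\|)(x)\quad\text{for a.e. }x.
\]
Finally I would invoke the weighted boundedness of the Hardy–Littlewood maximal operator: since $w\in A_p$, $M$ is bounded on $L^p(\RR^d,w)$, so $\|M(\|\nabla^r(f-g)\|)\|_{L^p(w)}\le C\|\nabla^r(f-g)\|_{L^p(w)}<C\ve$. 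Hence the function $x\mapsto\limsup_{h\to0}h^{-r}|f(x)-\wt Q_h f(x)|$, which is independent of $g$, has $L^p(w)$-norm at most $C\ve$ for every $\ve>0$; being nonnegative, it must vanish $w$-a.e., and since $w\in A_p$ is locally integrable and positive a.e., it vanishes Lebesgue-a.e. This gives $\lim_{h\to0}h^{-r}(f(x)-\wt Q_h f(x))=0$ for a.e.\ $x$, as claimed.

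The main obstacle I anticipate is not the density step itself — that is routine once the ingredients are in place — but rather justifying \eqref{start-bd} for rough $f\in W_p^r(\RR^d,w)$ rather than $f\in C^r$, i.e.\ propagating Corollary~\ref{cor-6-5-18} to the Sobolev setting. One must check that the mollification $f_\ve$ stays in $W_p^r(\RR^d,w)$ with $\nabla^r f_\ve\to\nabla^r f$ in a sense strong enough to control $M(\|\nabla^r f_\ve\|)$ pointwise a.e.\ in the limit (a.e.\ convergence along a subsequence of $M(\|\nabla^r(f_\ve-f)\|)$ suffices, via $L^p(w)$-boundedness of $M$ and the weighted Fatou argument), and that $\wt Q_h f_\ve(x)\to\wt Q_h f(x)$ for each fixed $h$ and $x$ — which follows because, for fixed $h$ and $x$, \eqref{4-2} involves only finitely many $\a$ up to an absolutely summable tail controlled by \eqref{1-1}, and each $f_\ve^h(\a)\to f^h(\a)$ since $S_\a$ is bounded on $L^1$ of a ball by \eqref{4-2-0} and $\sigma_h f_\ve\to\sigma_h f$ in $L^1_{\mathrm{loc}}$. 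A minor bookkeeping point is that one should also record why $r<m_2$ always holds here, so that only the first (log-free) case of Corollary~\ref{cor-6-5-18} is ever invoked; this follows from $m_2\ge\min\{m_0+\theta,n_0\}$ together with the hypotheses $r\le\ell$ and, implicitly, $n_0$ large enough — if $n_0$ could be small one would instead fall back on the $r=m_2$ case, which only adds a harmless $|\log h|$ factor that is absorbed because the surplus power $h^{\ell+1-r}$ in the smooth term still wins and the rough term is still $h$-uniformly bounded by $C\log(1/h)\cdot h^r$ times a maximal function... but in fact the statement restricts to $1\le r\le\ell$ precisely so that we stay strictly below the critical exponent, so I would simply state $r<m_2$ and move on.
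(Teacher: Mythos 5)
Your argument is correct and is precisely the ``usual density argument'' that the paper invokes without writing out: the paper's proof consists of the single remark that Corollary~\ref{cor-6-5-18} together with density yields the result, and your two-step scheme (extend \eqref{4-15} from $C^r$ to $W_p^r(\RR^d,w)$ by mollification, then split $f=g+(f-g)$ with $g\in C_c^\infty$, let the smooth term win via the surplus power $h^{\ell+1-r}$, and use the $A_p$-boundedness of $M$ to force the $\limsup$ to vanish a.e.) is the intended implementation. The only caveat concerns your closing aside: in the borderline case $r=m_2$ the rough term, after dividing by $h^r$, carries the unbounded factor $|\log h|$ and is \emph{not} $h$-uniformly controlled, so the density argument genuinely requires $r<m_2$ --- which, as you note, is automatic from $r\le\ell\le m_0-1<m_0+\theta$ when $b=0$ and is implicitly assumed (via $n_0>\ell$) otherwise.
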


Another result  that can be deduced directly  from Theorem \ref{thm-6-4-18} via  straightforward calculations
 is  the following.

\begin{cor} \label{cor-6-6}Let   $1\leq p\leq \infty$,   $w\in A_p$,  $1\leq r\leq \ell+1$,  and  $f\in W_p^r(\RR^d,w)$.
Then for $h\in (0,1)$,
\begin{align}\label{6-7-18}
\|f-\wt{Q}_h f\|_{p,w} \leq  Ch^r\|f\|_{W_p^r(\RR^d, w)},
\end{align}
with $h^r|\log h|$ in place of $h^r$ in the case when $r=m_2$.\end{cor}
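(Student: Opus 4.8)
The plan is to deduce Corollary~\ref{cor-6-6} from the pointwise bounds of Theorem~\ref{thm-6-4-18} by taking weighted $L^p$ norms, with a mollification step at the end to dispose of the superfluous smoothness of $f$. Set $g:=\|\nabla^r f\|$, so that $\|g\|_{p,w}\le\|f\|_{W_p^r(\RR^d,w)}$, and recall the two kernels occurring in Theorem~\ref{thm-6-4-18}, namely $\Psi(z)=\|z\|^{-d+1}(1+\|z\|)^{-1-m_2+r}$ (used when $r<m_2$) and $\Phi^h(z)=\|z\|^{-d+1}(\|z\|+h)^{-1}(1+\|z\|)^{-1}$ (used when $r=m_2$). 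Both are nonnegative, and since each of their factors is nonnegative and nonincreasing in $\|z\|$ (here $d\ge1$, and $-1-m_2+r<0$ because $r\le m_2$), both are radially nonincreasing. An elementary integration in polar coordinates gives $\|\Psi_h\|_1=\|\Psi\|_1\le C$ (the exponent at infinity is $-d-m_2+r<-d$), and, for $h\in(0,1)$, $\|\Phi^h\|_1\le C\max\{1,|\log h|\}$, the logarithm coming from the annulus $h\le\|z\|\le1$ where the integrand is comparable to $\|z\|^{-d}$.

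The analytic input I will use is the standard fact that for $w\in A_p$ and any nonnegative radially nonincreasing $K\in L^1(\RR^d)$, convolution with $K$ is bounded on $L^p(\RR^d,w)$ with norm at most $C_{p,w}\|K\|_1$. For $1<p<\infty$ this follows from the pointwise domination $g\ast K\le\|K\|_1\,Mg$ together with Muckenhoupt's theorem that $M$ is bounded on $L^p(\RR^d,w)$; for $p=1$ and $w\in A_1$ one uses instead Tonelli's theorem and the same pointwise domination applied to $w$, namely $\int (g\ast K)\,w=\int g\,(w\ast K)\le\|K\|_1\int g\,Mw\le C\int g\,w$, the last step being the defining inequality of $A_1$; and for $p=\infty$ one trivially has $\|g\ast K\|_\infty\le\|K\|_1\|g\|_\infty$ (with the natural convention that $w$ drops out of $\|\cdot\|_{\infty,w}$). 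Applying this with $K=\Psi_h$, respectively $K=\Phi^h$, to \eqref{6-4-18} and \eqref{6-5-12}, and bounding the extra term $\|\nabla^{r-1}f(x)\|$ in \eqref{6-5-12} by $\|\nabla^{r-1}f\|_{p,w}\le\|f\|_{W_p^r(\RR^d,w)}$, yields \eqref{6-7-18} --- with $h^r$, and with $h^r\max\{1,|\log h|\}$ when $r=m_2$ --- under the additional hypothesis $f\in C^r(\RR^d)$, which is what makes Theorem~\ref{thm-6-4-18} applicable.

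It remains to remove the assumption $f\in C^r$ (this is needed in particular when $p=\infty$, where $C^r$ is not dense in $W_\infty^r$). Fix a smooth radial mollifier $\rho_\va$ and put $f_\va:=f\ast\rho_\va\in C^\infty(\RR^d)$. Since $\nabla^k f_\va=(\nabla^k f)\ast\rho_\va$ for $k\le r$ and $\rho_\va$ is radially nonincreasing with $\|\rho_\va\|_1=1$, the boundedness statement above (for $k=0,\dots,r$) gives $\|f_\va\|_{W_p^r(\RR^d,w)}\le C\|f\|_{W_p^r(\RR^d,w)}$ uniformly in $\va$. Applying the estimate just proved to each $f_\va$ produces, uniformly in $\va$,
\[
\|f_\va-\wt{Q}_hf_\va\|_{p,w}\le Ch^r\|f\|_{W_p^r(\RR^d,w)}
\]
(and likewise with $h^r\max\{1,|\log h|\}$ when $r=m_2$). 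Now $f_\va\to f$ in $L^p(\RR^d,w)$, hence in $L^1_{\mathrm{loc}}(\RR^d)$; therefore $\s_h f_\va\to\s_h f$ in each $L^1(B_1(\a))$, and since every coefficient functional $g\mapsto g^h(\a)=S_\a(\s_h g)$ is continuous on $L^1(B_1(\a))$ by \eqref{4-2-0} while $\sum_\a|\psi_\a(x/h)|\le C$ for each fixed $h$ by Proposition~\ref{prop-1-1}, we obtain $\wt{Q}_hf_\va\to\wt{Q}_hf$ pointwise (in fact locally uniformly). Combining this convergence with the uniform bound above via Fatou's lemma (for $p<\infty$), or directly (for $p=\infty$), gives \eqref{6-7-18}.

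I expect the genuine difficulty to be concentrated at the endpoint $p=1$: there the Hardy--Littlewood maximal function that Corollary~\ref{cor-6-5-18} hands us is \emph{not} bounded on $L^1(\RR^d,w)$, so one is forced back to the convolution form of Theorem~\ref{thm-6-4-18} and must exploit both the radial monotonicity of $\Psi_h$ and $\Phi^h$ and the $A_1$ inequality $Mw\le Cw$ through the Tonelli argument above. The related point $p=\infty$, where smooth functions are not dense and the regularity must be shed by mollification and a limiting argument, is the other spot requiring a little care. The remaining ingredients --- the polar-coordinate estimates of $\|\Psi_h\|_1$ and $\|\Phi^h\|_1$, and the passage to the limit in $\va$ --- are routine.
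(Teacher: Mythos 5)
Your proof is correct and follows exactly the route the paper intends: the paper offers no written proof of Corollary~\ref{cor-6-6}, asserting only that it follows from Theorem~\ref{thm-6-4-18} ``via straightforward calculations,'' and your argument is precisely that calculation --- take weighted $L^p$ norms of \eqref{6-4-18} and \eqref{6-5-12}, control the convolutions with the radially nonincreasing integrable kernels $\Psi_h$ and $\Phi^h$ via the maximal function and the $A_p$ theory, and mollify to shed the $C^r$ hypothesis. Your explicit handling of the endpoints $p=1$ (Tonelli plus the $A_1$ inequality, since $M$ is not bounded on $L^1(w)$) and $p=\infty$ (mollification in place of density) supplies details the paper omits, and is sound.
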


%

The rest of the section is devoted to the proof of Theorem \ref{thm-6-4-18}.

\subsection{Proof of Theorem \ref{thm-6-4-18}}

For  simplicity, we will prove the result  for the case  of $1\leq r<m_2$ only.  The proof below with slight modifications works equally well for
the  case  of $r=m_2$.
By the property of   polynomial reproduction,  we  may write
\begin{align}
    |f(x)-\wt{Q}_h f(x)|\leq \sum_{\a\in A} |S_\a (\sa_h f) -T_x^{r-1} f(\a h)||\psi_\a (x/h)|.\label{6-8-18}
\end{align}
Since $T_x^{r-1} f(\a h) =T_{x/h}^{r-1} (\sa_h f)(\a)$  and $T_{x/h}^{r-1} (\sa_h f) \in \Pi_\ell^d$,  it follows   by \eqref{4-2-0}  that
\begin{align}
    |S_\a (\sa_h f) -T_x^{r-1} f(\a h)|&=\Bigl|S_\a \Bl[ \sa_h f-T_{x/h}^{r-1} (\sa_h f)\Br]\Br|\notag\\
    &\leq C \f 1{|B_h(\a h)|}\int_{B_h(\a h)} |f(y)-T_x^{r-1} f (y)|\, dy, \label{4-8}
\end{align}
which, using Theorem 3.1 of \cite{BD1} with $r=m$,  is bounded above by
\begin{align}C h^{-d} &\int_{B_h(\a h)}\|x-y\|^{r-1} \max_{u=x,y}  \Bl[\int_{B_{ 2\|x-y\|}(x)} \|\nabla^r f(z)\|\, \|z-u\|^{1-d}\, dz\Br]\, dy.\label{6-10-18}\end{align}
   Since $B_{2\|x-y\|}(x)\subset B_{2(\|x-\a h\|+h)}(x)$ whenever  $y\in B_h(\a h)$, the term  in \eqref{6-10-18} is bounded above by a constant multiple of
   $$ ( h+\|x-\a h\|)^{r-1} L_{x,\a, h}(x)+h^{-d}\int_{B_h(\a h)} \|x-y\|^{r-1} L_{x,\a,h} (y)\, dy,$$
   where
   \begin{equation}\label{6-11-20}L_{x,\a,  h}(u):= \int_{B_{2(\|x-\a h\|+h)}(x)} \|\nabla^r f(z)\|\|z-u\|^{-d+1}\, dz.\end{equation}
   Thus,
   the term on the right hand side of \eqref{6-8-18} is dominated  by a constant multiple of  $\Sigma_1(x)+\Sigma_2(x)$, where
\begin{align*}
\Sigma_1(x):=&\sum_{\a\in A   } \bl( h+ \| x -\a h\|\br)^{r-1}|\psi_\a(x/h)|  L_{x, \a, h} (x),\\
 \Sigma_2(x):=&h^{-d}  \sum_{\a\in A}|\psi_\a (x/h)|\int_{B_h(\a h)}  \|x-y\|^{r-1} L_{x, \a, h} (y)\, dy.\end{align*}

By the proof of Lemma 4.4 in \cite{BD1} with $m=r$, it is easily seen that
\begin{equation}\label{6-12-20}
\Sigma_1(x)\leq C h^r \bl( \|\nabla^r f\|\ast \Psi_h \br)(x).
\end{equation}

To estimate  the  sum $\Sigma_2(x)$, we decompose $A$ as $A:=\cup_{j=0}^{\infty} A_{j,h}(x)$, where
$A_{ 0, h}(x):=\{ \a\in A:\  \  \|\a-h^{-1} x\|\leq 2\}$  and $
    A_{j, h}(x):=\{ \a\in A:\  \ 2^j  <\|\a-h^{-1}x\|\leq 2^{j+1}\}$  for $  j\ge 1$.
Using \eqref{6-11-20},  for each $\a\in A_{j,h}$, we have
\begin{align*}
&\int_{B_h(\a h)}  \|x-y\|^{r-1} L_{x, \a, h} (y)\, dy\\
&\leq C  (2^j h)^{r-1}\int_{B_{2^{j+3}h}(x)} \|\nabla^r f(z)\|\Bl[\int_{B_h(\a h)}\|z-y\|^{-d+1}\, dy\Br]\, dz.\end{align*}
Since
$\cup_{\a\in A_{j,h}}B_h(\a h)\subset B_{ 2^{j+2}h}(x)$ and by quasi-uniformity,
$$\sup_{x\in\RR^d}\sup_{h\in (0,1)} \sum_{\a\in A} \newchi_{B_h(\a h)}(x)\leq C<\infty,$$
it follows that
\begin{align*}
   &h^{-d}\sum_{\a\in A_{j,h}(x)}|\psi_\a (x/h)|\int_{B_h(\a h)}  \|x-y\|^{r-1} L_{x, \a, h} (y)\, dy
\\&\leq C
   2^{-jm_2} (2^j h)^{r-1-d}\int_{B_{2^{j+3}h}(x)} \|\nabla^r f(z)\|\Bl[\int_{B_{2^{j+2}h}(x)}\|y-z\|^{-d+1}\, dy\Br]\, dz\\
   &\leq C 2^{-jm_2} (2^j h)^{r-d}\int_{B_{2^{j+3}h}(x)} \|\nabla^r f(z)\|
   dz,\end{align*}
where the second step uses the fact that $B_{2^{j+3}h}(x)\subset B_{2^{j+4}h}(z)$ whenever $z\in B_{ 2^{j+3}h}(x)$.
Hence, by Fubini's theorem,
\begin{align}
 &\Sigma_2(x)\leq C \sum_{j=0}^\infty2^{-jm_2} (2^j h)^{r-d}\int_{B_{2^{j+3}h}(x)} \|\nabla^r f(z)\|
   dz     \notag  \\
   &\leq C h^{r-d} \int_{\RR^d}\|\nabla^r f(z)\| \Bl[\sum _{2^j \ge c' (1+h^{-1}\|x-z\|)} 2^{-j(m_2+d-r)}\Br]\, dz \notag\\
   &\leq C h^r(\|\nabla^r f\|\ast \Psi_h )(x).  \label{6-13-20}
 \end{align}
Thus, combining \eqref{6-12-20} with \eqref{6-13-20}, we obtain the desired estimate,  and hence complete the proof.

\section{Compression in the space $C(\RR^d)$}
\setcounter{equation}{0}

One view of using compression for our approximation purposes is to
limit the number of terms in the expansions with radial basis
functions.  Compression with radial basis functions in the very
special way of using Gau\ss-kernels turns up also in the paper
\cite{HR}.

Indeed,
let $\vi$ be a continuous  function on $\RR^d$, and let $A\subset \RR^d$ be quasi-uniformly distributed.
For a positive integer $n$, we define
    $$\Sigma_{n,\vi, h} :=\Bigl\{ \sum_{j=1}^n c_j \vi ( h^{-1} \cdot -\a_j):\   \  \a_j\in A, \  x_j \in\RR^d,\
     \  c_j\in\RR\Bigr\},\   \  h\in (0,1).$$
    Our main purpose in this section
    is to study the following $n$-term approximation of a given continuous function $f$:
    $$ \hat\sa_{n,h} (f):=\inf_{g\in \Sigma_{n,\vi,h}}\|f-g\|_\infty.$$

 We start with a negative result for compression in the space $C(\RR^d)$, which asserts that  given  any $h>0$,
 the quantity $\hat\sa_{n,h} (f)$ may not tend
  to zero as $n\to \infty$ for  general
   continuous functions $f$.

    \begin{thm}
    Let $\{\psi_j\}_{j\in\ZZ}$ be a sequence of    continuous functions on $\RR^d$ with
         $$\lim_{\|x\|\to\infty} |\psi_j(x)|=0,\   \  \forall j\in\ZZ.$$ Then for any $n\in\NN$, and $r\in\NN$,
    $$ \sup_{\|f\|_{W_\infty^r}\leq 1 } \inf_{g\in \Sigma_n} \|f-g\|_{L^\infty(\RR^d)} \ge c_r>0,$$
    where
        $$\Sigma_n:= \Bigl\{ \sum_{j=1}^n c_j \psi_j(h_j^{-1} \cdot):\  c_j\in \RR, \  \  h_j>0\Bigr\}.$$

    \end{thm}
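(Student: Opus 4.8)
The plan is to argue by contradiction using a scaling and concentration argument. Suppose the conclusion fails for some $n$ and $r$, so that for every $\varepsilon>0$ every $f$ with $\|f\|_{W_\infty^r}\le 1$ admits $g\in\Sigma_n$ with $\|f-g\|_\infty<\varepsilon$. The key point is that each $\psi_j$ vanishes at infinity, so a dilate $\psi_j(h_j^{-1}\cdot)$ is uniformly small outside a ball of radius $R h_j$ for a suitable $R=R(\varepsilon,j)$; more precisely, for any $\delta>0$ there is $R_\delta$ with $|\psi_j(x)|\le\delta$ for $\|x\|\ge R_\delta$ and all $j$ in the finite index set actually used — but since $j$ ranges over $\ZZ$ we must be a little careful and instead exploit that only $n$ functions appear in any given $g$. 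The strategy is to build a target $f$ consisting of many well-separated identical bumps, more than any $n$-term combination from $\Sigma_n$ can simultaneously track.

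First I would fix a single fixed smooth bump $\varphi\in C_c^\infty(\RR^d)$ with $\|\varphi\|_{W_\infty^r}\le 1$, $\varphi(0)=1$, supported in the unit ball, and set $c_r:=\tfrac13$ (the precise value is immaterial). Assume for contradiction that the supremum is $<c_r$, so there is $g=\sum_{j=1}^n c_j\psi_{k_j}(h_j^{-1}\cdot)\in\Sigma_n$ with $\|f-g\|_\infty<c_r$ for the target $f$ to be chosen. The coefficients obey $|c_j|\le \|g\|_\infty+\|f\|_\infty\le$ a bound depending only on the data once we know $\|f\|_\infty\le 1$; in fact on any region where $f=1$ we need $\sum c_j\psi_{k_j}(h_j^{-1}x)>1-c_r$, and on any region where $f=0$ we need $|\sum c_j\psi_{k_j}(h_j^{-1}x)|<c_r$. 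Now place $N=n+1$ translated copies of $\varphi$, namely $f=\sum_{i=1}^{N}\varphi(\cdot-y_i)$, with the centres $y_i$ chosen inductively to be so far apart (relative to the scales $h_j$ and the tail radii of the $\psi_{k_j}$, which are now finitely many fixed functions) that near each $y_i$ at most finitely many of the dilated terms are non-negligible — and by a pigeonhole/counting argument, since there are only $n$ terms but $n+1$ bumps, at least one bump $y_{i_0}$ must be approximated using terms that are each, near $y_{i_0}$, within $c_r/(3n)$ of being ``off'' (small), forcing $|f-g|\ge 1-c_r/3$ there, a contradiction once $c_r<1/2$.

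The technical heart, and the main obstacle, is making the separation argument rigorous given that the $h_j$ are \emph{not} fixed in advance: a term $c_j\psi_{k_j}(h_j^{-1}\cdot)$ with $h_j$ enormous can be nearly constant on a huge region, so ``vanishing at infinity'' does not immediately localize it at a chosen scale. The resolution is to note that $\psi_{k_j}(h_j^{-1}x)$ is close to $\psi_{k_j}(0)$, hence \emph{nearly constant}, on balls of radius $o(h_j)$; if we distribute the $y_i$ on a sufficiently fine grid inside a large cube, then around most grid points every dilated term is either negligible (small $h_j$, far tail) or nearly constant (large $h_j$). On a cluster of grid points where all terms are nearly constant, $g$ itself is nearly constant, so it cannot match $f$ which is $1$ at the bump centre and $0$ between bumps — giving the same contradiction. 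Thus I would split into the two regimes $h_j\le \rho$ and $h_j>\rho$ for a threshold $\rho$ chosen after the $y_i$-spacing, handle the small-scale terms by their tails and the large-scale terms by near-constancy, and in either case produce a point where $|f-g|\ge c_r$. Careful bookkeeping of the finitely many functions $\psi_{k_1},\dots,\psi_{k_n}$ (there are only $\binom{\text{used indices}}{}$ finitely many, but \emph{which} indices appear depends on $g$, so one should phrase the whole argument for an arbitrary $g\in\Sigma_n$ and make the $y_i$ depend on $g$, which is legitimate since we only need one bad $f$ per... no — $f$ must be fixed first) — so in fact the cleanest route is: fix $f$ as $n+1$ bumps on a grid of spacing to be chosen, then observe that for \emph{any} choice of $n$ functions from $\{\psi_j\}$ and \emph{any} scales, the above localization forces failure, quantifying everything uniformly in $j$ by using only that $\sup_j$ is irrelevant since each individual $\psi_j\to 0$ — this uniformity-in-$j$ gap is the delicate point and I would handle it by choosing the grid adaptively after the $g$... which forces the contradiction-hypothesis to be stated as ``for this particular $f$'', hence the correct logical order is to \emph{first} suppose failure, \emph{then} for the would-be approximating family extract the structure — concluding that no finite $n$ can work, which is exactly the claimed lower bound $c_r>0$.
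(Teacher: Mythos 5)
Your proposal overlooks the one-line argument the paper uses, and the route you sketch instead is not completed. The decisive observation is that every $g=\sum_{j=1}^n c_j\psi_{k_j}(h_j^{-1}\cdot)\in\Sigma_n$ itself vanishes at infinity: once $g$ is fixed, each $h_j>0$ is a fixed positive number, so $\|h_j^{-1}x\|\to\infty$ as $\|x\|\to\infty$ and each term tends to $0$; a finite sum of such terms does too. Hence for the constant function $f\equiv 1$ (which satisfies $\|f\|_{W_\infty^r}\leq 1$) one has $\|f-g\|_{L^\infty(\RR^d)}\geq \limsup_{\|x\|\to\infty}|1-g(x)|=1$ for every $g\in\Sigma_n$, so the sup--inf is bounded below by a positive constant uniformly in $n$. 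No separation of scales, pigeonhole, or uniformity in $j$ is needed, because the limit at infinity is taken for each fixed $g$ separately; the lack of uniform decay over $\Sigma_n$ is irrelevant.

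By contrast, your multi-bump construction founders on exactly the obstruction you identify and do not resolve: $f$ must be fixed before the approximant chooses the indices $k_1,\dots,k_n\in\ZZ$ (from an infinite family with no uniform decay) and the scales $h_1,\dots,h_n>0$, so the bump spacing cannot be calibrated in advance to the tail radii or to the near-constancy scales of the terms. The pigeonhole step also presumes each term is localized near at most one bump, which fails for large $h_j$ or for $\psi_{k_j}$ that are themselves multi-bumped. Your final paragraph acknowledges the quantifier problem but ends without a construction, so as written the proposal is not a proof. A constant target function --- or, as the paper also remarks, an infinite non-decaying sum of shifts of a compactly supported test function --- bypasses all of this.
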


    \begin{proof}
Since for
     any $g\in \Sigma_n$,
    $$\lim_{\|x\|\to \infty} g(x)=0,$$
    it follows that
    $$\|f-g\|_\infty $$ 
remains positive for any constant function $f$ or infinite sum $f$ of shifts
of a suitable compactly supported test-function.   \end{proof}

The above proposition shows that in order to study the $n$-term approximation problem in $C(\RR^d)$, one has to assume some decaying condition of the function at infinity. For example, we have a positive result if we assume that $f\in W_p^r$ with $r>\f dp$ for some  $1\leq p<\infty$.
In fact, a greedy algorithm can be constructed in this case. This is the theme of our next theorem.
We refer to the book \cite{Te} for recent development
of greedy  approximation.

\begin{thm}\label{prop-4-2}
Assume that   $1\leq p<\infty$, $r>\f dp$ and
$f\in W_p^{r}(\RR^d)$. Let $\be: =\min\{ r-\f dp,\ell+1\}$.  For $n\in\NN$ define
$$\mathcal{G}_n f(x)=\sum_{\a\in \Ld} f(h\a)
 \psi_\a(h^{-1} x)\  \ \  \text{with $ h\sim n^{-\f1{p\b +d}}$},$$
where $\Ld\subset A$ satisfies that $\# \Ld = n$, and
$\min_{\a\in \Ld} |f(h\a)|
 \ge \max_{\a\in A\setminus\Ld} |f(h\a)|$. Then
\begin{equation}\|f-\mathcal{G}_n f\|_{L^\infty(\RR^d)}
\leq C n^{-\f{\b}{p\b+d}}\log n \|f\|_{W_p^r(\RR^d)}.\end{equation}\end{thm}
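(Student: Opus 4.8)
The strategy is to split the error $f - \mathcal{G}_n f$ into two parts: the full quasi-interpolation error $f - Q_h f$, which is controlled by the smoothness of $f$ via the $L^\infty$ estimate \eqref{0-3} (or rather its $W_p^r$ analogue through a Sobolev embedding, since $r > d/p$ guarantees $f \in C(\RR^d)$ and we can extract uniform control), and the \emph{truncation error} $Q_h f - \mathcal{G}_n f = \sum_{\a \in A \setminus \Ld} f(h\a)\, \psi_\a(h^{-1}\cdot)$, which consists precisely of the discarded small coefficients. The first term is handled by the already-established estimate: with $h \sim n^{-1/(p\b+d)}$ one gets $\|f - Q_h f\|_\infty \leq C h^{\b} |\log h| \|f\|_{W_p^r}$, and since $|\log h| \sim \log n$ and $h^\b \sim n^{-\b/(p\b+d)}$, this contributes the claimed bound. (One should note that $Q_h f$ genuinely makes sense here because $f$ is continuous; also that $\b \le \ell+1$ ensures we are within the range where \eqref{0-3}-type estimates apply.)

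\textbf{The truncation term.} This is the heart of the matter. First I would bound the number of ``large'' coefficients: because $f \in W_p^r \subset L^p$ and the points $h\a$ for $\a \in A$ are $h$-separated (quasi-uniformity scaled by $h$), a standard counting argument gives $\sum_{\a \in A} |f(h\a)|^p \leq C h^{-d} \|f\|_{L^p}^p$ — indeed, using $f \in W_p^r$ with $r > d/p$ one can even localize: $|f(h\a)|^p \lesssim h^{-d} \int_{B_{ch}(h\a)} (|f|^p + h^{pr}|\nabla^r f|^p)$ or a similar mean-value-plus-Sobolev estimate, and summing over $\a$ (with bounded overlap) yields $\sum_{\a} |f(h\a)|^p \le C h^{-d}\|f\|_{W_p^r}^p$. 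Since $\Ld$ collects the $n$ largest values $|f(h\a)|$, the $(n+1)$-st largest value $\lambda_n := \max_{\a \in A \setminus \Ld}|f(h\a)|$ satisfies $n \lambda_n^p \le \sum_\a |f(h\a)|^p \le C h^{-d}\|f\|_{W_p^r}^p$, hence
$$\lambda_n \le C \bigl(n h^{d}\bigr)^{-1/p}\|f\|_{W_p^r}.$$
For the discarded tail I would then estimate, pointwise in $x$,
$$\Bigl| \sum_{\a \in A \setminus \Ld} f(h\a)\psi_\a(h^{-1}x)\Bigr| \le \lambda_n \sum_{\a \in A \setminus \Ld} |\psi_\a(h^{-1}x)| \le \lambda_n \sum_{\a \in A} |\psi_\a(h^{-1}x)| \le C \lambda_n,$$
using the decay \eqref{1-1} of $\psi_\a$ together with quasi-uniformity to bound $\sum_{\a \in A}|\psi_\a(y)| \le C$ uniformly in $y$ (this is essentially the $a=0$ case of \eqref{1-4-i} with $h=1$). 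It remains to check that $\lambda_n$ with the chosen scaling $h \sim n^{-1/(p\b+d)}$ is of the right size: $n h^d \sim n \cdot n^{-d/(p\b+d)} = n^{p\b/(p\b+d)}$, so $(nh^d)^{-1/p} \sim n^{-\b/(p\b+d)}$, exactly matching the target rate (and without even needing the $\log n$ factor for this term).

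\textbf{Main obstacle.} The delicate point is the uniform ($L^\infty$) bound on the quasi-interpolation error $\|f - Q_h f\|_\infty$ for $f$ only in $W_p^r$ rather than in a classical $C^{\ell+1}$ class: the estimate \eqref{0-3} is stated for $f \in C^{\ell+1}$ with bounded derivatives, so I would need to either invoke the pointwise Corollary~\ref{cor-6-5-18} (applied with the ordinary $Q_h$ rather than $\wt Q_h$, which is legitimate since $f$ is continuous) to get $|f(x) - Q_h f(x)| \le C h^\b \log h\, M(\|\nabla^\b f\|)(x)$ and then bound $M(\|\nabla^\b f\|)$ in $L^\infty$ — which again uses $r > d/p$ via Sobolev embedding $W_p^{r} \hookrightarrow W_\infty^{\b}$ type reasoning, or more carefully $\|\nabla^\b f\|$ lies in a space where its maximal function is bounded. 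Alternatively, one truncates $r$ down to $\b$ and uses the embedding $W_p^{r} \hookrightarrow C^{\b}$-with-bounded-derivatives (valid since $r - d/p \ge \b$ is \emph{not} quite what's needed — one needs $r - d/p > \b$ or to absorb the borderline case into the log), which is the cleanest route but requires care at the endpoint $\b = r - d/p$. Reconciling the fractional smoothness with the integer-order estimates, and making sure the $\log n$ in the final bound genuinely covers both the $|\log h|$ from \eqref{0-3} and any logarithmic loss in a borderline Sobolev embedding, is where the real bookkeeping lies; everything else is routine.
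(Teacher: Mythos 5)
Your proposal follows essentially the same route as the paper: split off $f-Q_hf$, bound the discarded tail via the counting estimate $\sum_{\a}|f(h\a)|^p\leq Ch^{-d}\|f\|_{W_p^r}^p$ together with the uniform bound $\sum_\a|\psi_\a(x/h)|\leq C$, and balance with $h\sim n^{-1/(p\b+d)}$. The one point you flag as the "main obstacle" --- the estimate $\|f-Q_hf\|_\infty\leq Ch^{\b}|\log h|\,\|f\|_{W_p^r}$ for $f$ merely in $W_p^r$ with $r>d/p$ --- is not re-derived in the paper but simply quoted from (5.7) of \cite{BD1}, so your sketch of how to obtain it via Sobolev embedding is extra work the paper avoids by citation.
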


The proof of Theorem  \ref{prop-4-2} relies on the following straightforward  lemma.

  \begin{lem}\label{lem-4-3} Given any $(a_\a)_{\al\in A}\subset \RR$, we have $$\Biggl\|\sum_{\a\in A }a_\a \psi_\a\Biggr\|_{L^p(\RR^d)} \leq C \Bigl(\sum_{\a\in A}|a_\a|^p\Bigr)^{\f1p},\   \  1\leq p\leq \infty,$$
  with the usual change when $p=\infty$.\end{lem}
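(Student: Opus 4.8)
The plan is to establish the $L^p$-stability estimate
$$\Bigl\|\sum_{\a\in A} a_\a \psi_\a\Bigr\|_{L^p(\RR^d)}\le C\Bigl(\sum_{\a\in A}|a_\a|^p\Bigr)^{1/p}$$
by first treating the two endpoint cases $p=1$ and $p=\infty$ directly from the decay estimate \eqref{1-1} and the quasi-uniformity of $A$, and then interpolating (or, more elementarily, running the same Schur-type argument for general $p$). For $p=\infty$, one simply bounds $\bigl|\sum_\a a_\a\psi_\a(x)\bigr|\le \|(a_\a)\|_\infty \sum_\a |\psi_\a(x)|$ and notes that $\sum_\a (1+\|x-\a\|)^{-d-m_2}\le C$ uniformly in $x$ because $A$ is quasi-uniformly distributed (the balls $B_{c_0}(\a)$ have bounded overlap, so the sum is comparable to an integral $\int_{\RR^d}(1+\|x-y\|)^{-d-m_2}\,dy<\infty$ since $m_2>0$). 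For $p=1$, by the triangle inequality $\bigl\|\sum_\a a_\a\psi_\a\bigr\|_1\le \sum_\a |a_\a|\,\|\psi_\a\|_1$, and each $\|\psi_\a\|_1\le C\int_{\RR^d}(1+\|x-\a\|)^{-d-m_2}\,dx=C$ independent of $\a$.

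For general $1\le p\le\infty$ one can avoid interpolation machinery and argue by hand using Hölder's inequality (a Schur test). Write $|\psi_\a(x)|\le C w_\a(x)$ with $w_\a(x):=(1+\|x-\a\|)^{-d-m_2}$, split $w_\a(x)=w_\a(x)^{1/p}w_\a(x)^{1/p'}$, and apply Hölder in the sum over $\a$:
$$\Bigl|\sum_\a a_\a\psi_\a(x)\Bigr|\le C\Bigl(\sum_\a |a_\a|^p w_\a(x)\Bigr)^{1/p}\Bigl(\sum_\a w_\a(x)\Bigr)^{1/p'}\le C\Bigl(\sum_\a |a_\a|^p w_\a(x)\Bigr)^{1/p},$$
using the uniform bound $\sum_\a w_\a(x)\le C$ from the quasi-uniform case just discussed. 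Raising to the $p$th power, integrating in $x$, and applying Fubini gives
$$\Bigl\|\sum_\a a_\a\psi_\a\Bigr\|_p^p\le C^p\sum_\a |a_\a|^p\int_{\RR^d} w_\a(x)\,dx=C^p\,\tilde C\sum_\a|a_\a|^p,$$
again since $\int w_\a=\int(1+\|y\|)^{-d-m_2}\,dy<\infty$ is finite and independent of $\a$ (here $m_2>0$ is exactly what is needed for integrability). Taking $p$th roots yields the claim, with the obvious modification for $p=\infty$ noted in the statement.

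The only genuinely substantive input is the decay estimate \eqref{1-1} from Theorem~A together with quasi-uniformity of $A$, so there is no real obstacle; the mild care needed is just to verify that the overlap condition $\sum_\a\newchi_{B_{c_0}(\a)}\le C_1$ indeed converts the discrete sum $\sum_\a (1+\|x-\a\|)^{-d-m_2}$ into a convergent integral uniformly in $x$ — one covers $\RR^d$ by the balls $B_{c_0}(\a)$, compares $(1+\|x-\a\|)^{-d-m_2}$ with $\int_{B_{c_0}(\a)}(1+\|x-y\|)^{-d-m_2}\,dy$ up to a constant depending only on $c_0$ and $d$, and sums using bounded overlap. This is the kind of routine fact I would state in one line and not belabor.
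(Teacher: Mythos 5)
Your proof is correct and is essentially the paper's own argument: the paper proves the lemma in one line by combining H\"older's inequality with the uniform bound $\sup_x\sum_{\a}|\psi_\a(x)|\leq C$ (estimate \eqref{1-4-i} with $a=0$), which is exactly your Schur-test step; you merely re-derive that uniform bound from \eqref{1-1} and quasi-uniformity instead of citing Proposition~\ref{prop-1-1}.
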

  \begin{proof}This lemma follows directly from the H\"older
    inequality and the first estimate in Proposition~2.3 in
    \cite{BD1} (i.e., \eqref{1-4-i}) with $i=0$.
  \end{proof}

We are now in a position to prove Theorem  \ref{prop-4-2}.\\

{\it Proof of Theorem \ref{prop-4-2}.}
Using Lemma 6.4 of \cite{BD1}, we have
$$\sum_{\a\in A} |f(h\a)|^p \leq C^p h^{-d} \|f\|^p_{W_p^r}.$$
This, in particular, implies that  there are  at most $n$ elements in the set $A$ such that
\begin{equation*} |f(h\a)|\ge C h^{-d/p} n^{-\f1p} \|f\|_{W_p^r}.\end{equation*}
Thus, for the set $\Ld$ defined in Theorem  \ref{prop-4-2}, we have
$$|f(h\a)|\leq  C h^{-d/p} n^{-\f1p} \|f\|_{W_p^r},\   \ \forall \a\in A\setminus \Ld.$$
By  Lemma \ref{lem-4-3}, this in turn implies that
$$|Q_hf(x)-\mathcal{G}_n f(x)|\leq
\sum_{\a\in A\setminus\Ld} |f(\a h)||\psi_\a(x/h)|\leq Ch^{-d/p} n^{-\f1p} \|f\|_{W_p^r}.$$
On the other hand, using (5.7) in \cite{BD1},  $$\|f-Q_h f\|_\infty\leq C h^\b |\log h|\|f\|_{W_p^r}.$$
Therefore,  choosing  $h\sim n^{-\f 1{\b p+d}}$ so that
$ h^\b\sim n^{-\f1p}h^{-d/p},$
we have that
$$\|f-\mathcal{G}_n
f\|_\infty\leq C n^{-\f \b{\b p+d}} \log n \|f\|_{W_p^r}.$$
\hb

In general, we have the following result:

\begin{prop}
    Assume that $1\leq r\leq \ell+1$, and $f: \RR^d\to \RR$ has bounded, continuous partial derivatives up to the order $r$.   Assume, in addition,  that  $f$ satisfies the decaying condition
    \begin{equation}\label{rem2}
      \sup_{M\ge 1} M^{a}\Bl(\int_{\|x\|\ge M }|f(x)|^p\, dx\Br)^{\f1p}\leq C_1<\infty
    \end{equation}
    for some $a\ge 0$, and $1\leq p< \infty$. For $n\in\NN$, let $h\in (0,1)$ be such that
    $$h\sim  n^{-\f 1d \f{ap+d}{d+(a+r)p}}, $$  and define
    $$\Ld:=\Bl\{\a\in A:\  \|\a\|\leq n^{1/d}\
    \text{\rm or}\  \  |S_\a (\sa_h f)| \ge h^r \Br\}.$$
    Then $\#\Ld\leq cn$, and the function
    $$\mathcal{G}_n f(x):=\sum_{\a\in \Ld} (S_\a (\sa_h f)) \psi_\a(x/h)$$
    satisfies
    \begin{equation}\label{rem}\|f-\mathcal{G}_n f\|_\infty\leq C n^{-\f rd \f {a+d}{a+d+r}}\Bl[ C_1 + \|\nabla^r f\|_\infty +\|\nabla^\ell f\|_\infty\Br].\end{equation}
\end{prop}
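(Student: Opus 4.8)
The plan is to split the error $f-\mathcal G_n f$ into three contributions, mirroring the structure of the proof of Theorem~\ref{prop-4-2}: a tail term coming from the decay hypothesis \eqref{rem2}, a truncation term coming from the discarded small coefficients $S_\a(\sa_h f)$ with $\a\notin\Ld$, and the main quasi-interpolation error $\|f-\wt Q_h f\|_\infty$. First I would verify that $\#\Ld\le cn$: the centres with $\|\a\|\le n^{1/d}$ number at most $Cn$ by quasi-uniformity, and for the centres with $|S_\a(\sa_h f)|\ge h^r$ one uses \eqref{4-2-0} together with the $\ell^p$-summability estimate for $(f^h(\a))_\a$ (as in Lemma~6.4 of \cite{BD1}, exactly as invoked in the proof of Theorem~\ref{prop-4-2}, but now only integrating $|f|^p$ over $\RR^d$, which is finite since the decay condition with $a\ge0$ and boundedness of $f$ force $f\in L^p$) — giving a bound $\#\{|S_\a(\sa_h f)|\ge h^r\}\le C h^{-d}h^{-rp}\|f\|_{L^p}^p\cdot h^{rp}=C h^{-d}\|f\|_{L^p}^p$, which is $\le Cn$ for the chosen $h\sim n^{-\f1d\f{ap+d}{d+(a+r)p}}$ (here one checks the exponent: $h^{-d}\sim n^{(ap+d)/(d+(a+r)p)}\le n$).

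Next I would estimate the three pieces pointwise. For the main term, Corollary~\ref{cor-6-5-18} (the $r<m_2$ case, or the $r=m_2$ case absorbing a $|\log h|$) gives $|f(x)-\wt Q_h f(x)|\le C h^r M(\|\nabla^r f\|)(x)\le C h^r\|\nabla^r f\|_\infty$. For the truncation term, $|\wt Q_h f(x)-\mathcal G_n f(x)|\le \sum_{\a\in A\setminus\Ld}|S_\a(\sa_h f)|\,|\psi_\a(x/h)|$; I split this further according to whether $|S_\a(\sa_h f)|<h^r$ (bounded by $h^r\sum_\a|\psi_\a(x/h)|\le C h^r$ using \eqref{1-4-i} with $a=0$) or $\|\a\|>n^{1/d}$. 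For the latter sub-sum, $|S_\a(\sa_h f)|\le C h^{-d}\int_{B_h(\a h)}|f|$; since $\|\a\|>n^{1/d}$ means $\|\a h\|\gtrsim h n^{1/d}$, Hölder on each ball and the decay hypothesis \eqref{rem2} with $M\sim h n^{1/d}$ bound $\sum_{\|\a\|>n^{1/d}}|S_\a(\sa_h f)||\psi_\a(x/h)|$ by a constant times $C_1 (h n^{1/d})^{-a}$ times an $\ell^{p'}$-norm of $(\psi_\a(x/h))_\a$ which is $O(h^{d/p'})=O(h^{d-d/p})$, and after simplification this is $\le C C_1 (hn^{1/d})^{-a} h^{d/p'}\cdot h^{-d/p'}$... more carefully one gets a bound $C C_1 h^a n^{a/d}$—wait, one must track this: it comes out as $C_1$ times a negative power of $n^{1/d}$ balanced against $h$, and the choice of $h$ is precisely what makes all three error contributions comparable. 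Finally the contributions $\|\nabla^\ell f\|_\infty$ enter through the $r=m_2$ logarithmic case of Corollary~\ref{cor-6-5-18} (the $\|\nabla^{r-1}f\|$ term) and are absorbed into the bracket on the right of \eqref{rem}.

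The last step is the bookkeeping of exponents: with $h\sim n^{-\f1d\f{ap+d}{d+(a+r)p}}$ one checks directly that $h^r\sim n^{-\f rd\f{ap+d}{d+(a+r)p}}$ and that the tail term, of order $C_1$ times $h^{-a}n^{-a/d}$ (from the balancing of $(hn^{1/d})^{-a}$ against the remaining powers of $h$), also reduces to $n^{-\f rd\f{a+d}{a+d+r}}$; a short computation shows $\f rd\f{ap+d}{d+(a+r)p}$ and the exponent $\f rd\f{a+d}{a+d+r}$ in \eqref{rem} agree in the relevant regime (indeed when $p$ is absorbed into the definition of $a$ they coincide). Collecting the three pointwise bounds, taking the supremum over $x$, and using that a possible $|\log h|=O(\log n)$ is swallowed by passing to a slightly smaller exponent (or is acceptable if one is content with the stated clean power, as the constant $C$ may absorb it for the purposes of \eqref{rem} as written), yields \eqref{rem}.

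\textbf{Main obstacle.} The delicate point is the tail estimate: controlling $\sum_{\|\a\|>n^{1/d}}|S_\a(\sa_h f)|\,|\psi_\a(x/h)|$ uniformly in $x\in\RR^d$. Because $x$ ranges over all of $\RR^d$, when $x$ itself is large the decay of $\psi_\a(x/h)$ does not help (the centres $\a$ near $x$ can be the far-away ones), so one genuinely needs the decay of $f$ at infinity via \eqref{rem2} rather than the localisation of $\psi_\a$; the argument must dyadically decompose the far region in $\|\a h\|$, apply \eqref{rem2} on each annulus, and sum a geometric-type series in which the ratio is controlled by $a>0$ being strictly positive only in the borderline—if $a=0$ one still gets a bound by $\|f\|_{L^p}$ but without decay in $n$ from this piece, which is why the exponent in \eqref{rem} degenerates to $0$ as $a\to0$. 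Getting the powers of $h$ and $n^{1/d}$ to balance so that this tail term matches $h^r\|\nabla^r f\|_\infty$ is the computational crux, and it is what dictates the precise choice of $h$.
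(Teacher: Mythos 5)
Your overall architecture (count $\#\Ld$, then split the error into the quasi-interpolation error plus a truncation term) matches the paper's, but both halves go wrong at the decisive point. For the counting, you bound $\#\{\a:|S_\a(\sa_h f)|\ge h^r\}$ by Chebyshev against the global sum $\sum_{\a\in A}|S_\a(\sa_h f)|^p\le Ch^{-d}\|f\|_{L^p}^p$; the correct Chebyshev bound is then $Ch^{-d-rp}\|f\|_{L^p}^p$ (your line ``$Ch^{-d}h^{-rp}\|f\|_{L^p}^p\cdot h^{rp}$'' inserts a spurious factor $h^{rp}$), and with $h\sim n^{-\f 1d\f{ap+d}{d+(a+r)p}}$ one computes $h^{-d-rp}\sim n^{1+\f{rap^2}{d(d+(a+r)p)}}\gg n$ whenever $a>0$, so this route does not give $\#\Ld\le cn$. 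The decay hypothesis \eqref{rem2} is exactly what repairs this, and it must be spent here, not later: the centres with $\|\a\|\le n^{1/d}$ are already in $\Ld$ and number $\le Cn$ by quasi-uniformity, so one only needs to count the $\a$ with $\|\a\|\ge n^{1/d}$ \emph{and} $|S_\a(\sa_h f)|\ge h^r$, and for those \eqref{4-2-0} gives $h^d\sum_{\|\a\|\ge n^{1/d}}|S_\a(\sa_h f)|^p\le C\int_{\|y\|\ge n^{1/d}h/2}|f|^p\le CC_1^p(n^{1/d}h)^{-ap}$. Chebyshev then bounds this count by $Ch^{-d-rp}C_1^p(n^{1/d}h)^{-ap}$, and the extra factor $n^{-ap/d}h^{-ap}$ is precisely what brings the total down to $\sim n$ for the prescribed $h$; equivalently, the threshold at which at most $n$ far-away coefficients survive is $CC_1n^{-\f ad-\f1p}h^{-a-\f dp}\sim C_1h^r$, which is how the choice of $h$ is actually dictated.

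Second, you have misread the complement of $\Ld$. Since $\Ld$ is defined by an ``or'', every $\a\in A\setminus\Ld$ satisfies \emph{both} $\|\a\|>n^{1/d}$ and $|S_\a(\sa_h f)|<h^r$; there is no further case split, and the truncation term is immediately $\|\wt Q_hf-\mathcal{G}_nf\|_\infty\le\sup_{\a\notin\Ld}|S_\a(\sa_h f)|\cdot\sup_x\sum_{\a}|\psi_\a(x/h)|\le Ch^r$ by Lemma~\ref{lem-4-3} with $p=\infty$. The uniform-in-$x$ dyadic tail estimate that you single out as the main obstacle is therefore not needed at all: the decay of $f$ at infinity is consumed entirely by the counting step. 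With these two corrections the rest is as you say — $\|f-\wt Q_hf\|_\infty\le C_fh^r|\log h|$ and $h^r\sim n^{-\f rd\f{ap+d}{d+(a+r)p}}$ — and the proof closes, modulo the logarithm and the fact that this exponent matches the one printed in \eqref{rem} only for $p=1$, a discrepancy present in the paper itself which you correctly flag.
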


We note that for compactly supported functions $f$, \eqref{rem2} is true for any positive number $a$, with the constant $C_1$ depending on the diameter of the support of $f$, and \eqref{rem} then provides the optimal approximation order.

\begin{proof} Using \eqref{4-1}, we have that
\begin{align*}
    h^d\sum_{\|\a\|\ge n^{1/d}} |S_\a (\sa_h f)|^p  &\leq C \sum_{\|\a\|\ge n^{1/d}}  \int_{B_h(\a h)} |f(y)|^p\,  dy\\
    &\leq C \int_{\|y\|\ge  n^{1/d}h/2} |f(y)|^p  dy\leq C C_1^p (n^{1/d} h)^{-ap}.
\end{align*}
This means that  there exist at most $c n$ points $\a$ in the set $A$ such that $\|\a\|\ge n^{1/d}$ and
 $$|S_\a (\sa_h f)|  \ge C^{1/p} C_1 n^{-\f ad -\f1p} h^{-a-\f dp}.$$
Since
$$\# \Bl\{\a\in A:\ \|\a\|\leq n^{1/d}\Br\}\leq cn,$$
it follows that $\# \Ld\leq c'n$, and that
\begin{equation}\label{}
 |S_\a (\sa_h f)|  \leq  C^{1/p} C_1 n^{-\f ad -\f1p} h^{-a-\f dp}\leq C C_1 h^r ,\   \  \forall \a\in A\setminus\Ld.
\end{equation}
Thus,
\begin{align*}
    \|f-\mathcal{G}_n f\|_\infty&\leq \|f-Q_h f\|_\infty+\|Q_h f-\mathcal{G}_n f\|_\infty\\
    &\leq C_f  h^{r}|\log h| + C \sup_{\a \in A\setminus \Ld} |S_\a (\sa_h f)|\\
    &\leq C_f h^r |\log h|+ C C_1 h^r\leq C_f' n^{-\f rd \f {a+d}{a+d+r}}.
\end{align*}

\end{proof}

\section{Compression in $L^p$ with $1<p<\infty$}
\setcounter{equation}{0}
 The main purpose in this section is to study compression with  radial basis functions  for smooth functions in $L^p$.
 We will follow closely the  argument   of  DeVore and Ron \cite{DR} .  The pointwise error estimates (i.e.,  Corollary 4.2 in \cite{BD1}  and Corollary \ref{cor-6-5-18}) will play crucial roles in our proof.

We begin with studying decompositions of locally integrable functions by wavelets (see the aforementioned book by Daubechies). Later-on we shall use these remarks for studying radial basis functions and quasi-interpolants.

\subsection{Wavelet decomposition  and  the Triebel-Lizorkin spaces}   Most materials in this subsection can be found in \cite{M} and \cite{DR}.
Let $\DD_j$ denote the set of dyadic cubes of side length $2^{-j}$, and let  $\DD=\bigcup_{j\in\ZZ}\DD_j$. Thus, each $I\in\DD_j$ can be written in the form
$$ I=2^{-j} ( k+[0,1)^d)  \    \   \text{for some $  k\in\ZZ^d$}.$$
Write $\ell(I)=2^{-j}$ for each $I\in\DD_j$.
Let $$E:=\{ 1, 2, \ldots, 2^d-1\}, \   \   \mathcal{V}=\DD\times E.$$
For each $v=(I_v, e_v)\in \VV$ with $I_v\in\DD$ and $e_v\in E$, we denote
$$\ell(v):=\ell(I_v),\   \ |v|=\ell(v)^d=|I_v|.$$

Let $\{w_v\}_{v\in\VV}$ be an orthogonal  wavelet basis for $L_2(\RR^d)$  with the following  properties:
\begin{enumerate}[\rm (i)]
\item Each wavelet $w_v$ with $I_v:=2^j (k+[0,1]^d)\in\DD$, is supported in the cube
    $$\bar{I}_v =2^j (k+A_0 [0,1]^d),$$
    with $A_0$ being some fixed absolute constant.

    \item The  wavelets $w_v$ are $m_2$-times differentiable, and have $m_2$ vanishing moments  for a sufficiently large positive integer  $m_2$. Moreover,
        \begin{equation}\label{6-1}|D^\a w_v(x)|\leq C \ell(I_v)^{-|\a|}\newchi_{\bar{I}_v}(x),\   \  |\a|\leq m_2,\   \  x\in\RR^d,\end{equation}
        and
    $$\|w_v\|_p\sim |I_v|^{\f1p},\   \   \  \forall v\in\VV,\   \ 1\leq p\leq \infty.$$

    \item Each locally integrable function $f$ defined on $\RR^d$ has a wavelet decomposition
        $$ f=\sum_{v\in\VV} f_v w_v,\  \  f_v:=|I_v|^{-1}\la f, w_v\ra,$$
        and many classical function spaces on $\RR^d$ can be defined or characterized equivalently in terms of the coefficients of these wavelet decompositions. For example, given   $s>0$ and $0<p,q<\infty$,  the Triebel-Lizorkin spaces $F_{p,q}^s(\RR^d)$ can be equivalently defined to be  the space of all functions $f$ on $\RR^d$ with
         $$|f|_{F_{p,q}^s} := \|\Delta_{s,q} f\|_{L^p(\RR^d)}<\infty,$$
            where
            $$\Delta_{s,q} f(x):=\Bl ( \sum_{v\in\VV} \ell(v)^{-qs} |f_{v}|^q \newchi_{\bar{I}_v}(x)\Br)^{1/q},$$
             The quasi-norm in $F_{p,q}^s(\RR^d)$ is defined by
            $$\|f\|_{F_{p,q}^s(\RR^d)} :=\|f\|_{L^p(\RR^d)} +|f|_{F_{p,q}^s(\RR^d)}.$$          The definition of the Triebel-Lizorkin spaces $F_{p,q}^s$ extends naturally to the $q=\infty$ case, with $\Delta_{s,\infty} f$ defined by
            $$\Delta_{s,\infty} f(x) :=\sup_{v\in\VV} \ell(v)^{-s} |f_{v}|\newchi_{\bar{I}_v}(x).$$                  \end{enumerate}

\subsection{Nonlinear approximation}
Let $\Sigma_N=\Sigma_N(\vi)$ denote the set of all functions $S$  on $\RR^d$ for which there exists a subset $\Ld\subset A$ of cardinality $ \#\Ld=N$ such that $$ S=\sum_{\al\in\Ld} a_\a\psi_\a (h^{-1}\cdot)\  \ \   \text{for some  $h>0$ and $a_\a\in\RR$},$$
with the quasi-interpolants as above.
   Define
$$\sigma_N (f)_p:=\inf_{S\in \Sigma_N} \|f-S\|_{L^p(\RR^d)}.$$

For the sake of simplicity, we will assume that
$\ell+1<m_2$ in this section so that there is  no
 log term in \eqref{4-15}.

\begin{thm} \label{thm-6-1}Let $1< p<\infty$ be given, and let $f\in F_{\tau, q}^s$, with $0<s\leq \ell+1$, $\tau:=(1/p+s/d)^{-1}$ and $q=(1+s/d)^{-1}$. Then
$$\hat\sa_N (f)_p\leq C N^{-s/d} \|f\|_{F_{\tau,q}^s}.$$
\end{thm}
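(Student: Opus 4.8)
The plan is to follow the DeVore--Ron strategy of reducing the nonlinear radial-basis-function approximation problem to a nonlinear wavelet (or quasi-interpolant) approximation problem. First I would fix a scale parameter $h>0$, to be chosen at the end in terms of $N$, and compare $f$ with the linear quasi-interpolant $Q_hf$ (or rather $\wt Q_hf$, applied to the wavelet pieces) using the pointwise bound of Corollary~\ref{cor-6-5-18} with $r=\ell+1$: since we assumed $\ell+1<m_2$ there is no logarithmic factor, so $|f(x)-\wt Q_hf(x)|\le Ch^{\ell+1}M(\|\nabla^{\ell+1}f\|)(x)$, and taking $L^p$ norms and using the maximal inequality gives a ``linear tail'' of size $Ch^{s}$ times a seminorm of $f$ when $s=\ell+1$; for $0<s<\ell+1$ one instead estimates $\|f-\wt Q_hf\|_p$ by the $F^s_{p,q}$-type Jackson bound $\le Ch^{s}|f|$ coming from the wavelet characterization. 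The point is that $\wt Q_hf$ lies in the linear span of $\{\psi_\a(h^{-1}\cdot)\}_{\a\in A}$, so the approximation error will be controlled once we sparsify it.

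Next I would expand $f$ in the wavelet basis $f=\sum_{v}f_vw_v$ and transfer this to the quasi-interpolant side. Because each $\wt Q_h$ is linear and bounded (Lemma~\ref{lem-4-3} gives $\ell^p$-to-$L^p$ boundedness of the coefficient map), the key observation is that the coefficients $(f^h(\a))_\a$ of $\wt Q_hf=\sum_\a f^h(\a)\psi_\a(h^{-1}\cdot)$ can be dominated, via the averaging operators $S_\a$ and the wavelet bound \eqref{6-1}, by a discrete analogue of $f$'s wavelet coefficients at scale $\sim h$: one shows $\sum_\a|f^h(\a)|^\tau h^d\lesssim \sum_{v:\ell(v)\gtrsim h}|I_v|\,|f_v|^\tau h^{(\cdots)}$ or, more cleanly, that the $n$-term approximation of $\wt Q_hf$ from $\Sigma_n(\vi)$ is governed by the decreasing rearrangement of $\{f^h(\a)\}$, which in turn is governed by $\|f\|_{\ell^\tau}$-type quantities controlled by $|f|_{F^s_{\tau,q}}$ through the standard identity $\tau=(1/p+s/d)^{-1}$. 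Concretely, I would keep the $N$ largest coefficients $f^h(\a)$, call that index set $\Ld$ with $\#\Ld=N$, set $S=\sum_{\a\in\Ld}f^h(\a)\psi_\a(h^{-1}\cdot)\in\Sigma_N$, and estimate $\|\wt Q_hf-S\|_p=\|\sum_{\a\notin\Ld}f^h(\a)\psi_\a(h^{-1}\cdot)\|_p\le C(\sum_{\a\notin\Ld}|f^h(\a)|^p)^{1/p}$ by Lemma~\ref{lem-4-3}.

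The heart of the estimate is then the classical ``Stechkin-type'' bound: for a sequence in $\ell^\tau$ with $\tau<p$, the $\ell^p$ norm of the tail after removing the $N$ largest terms is $\lesssim N^{-(1/\tau-1/p)}\|\cdot\|_{\ell^\tau}$. Applying this with $1/\tau-1/p=s/d$ yields $\|\wt Q_hf-S\|_p\le CN^{-s/d}h^{-d/\tau+d/p}\|(f^h(\a))\|_{\ell^\tau}= CN^{-s/d}h^{-s}\|(f^h(\a))\|_{\ell^\tau}$ (after inserting the correct $h$-powers from the $L^p$-normalization of $\psi_\a(h^{-1}\cdot)$), and the quasi-uniformity plus the averaging bound \eqref{4-2-0} and the wavelet characterization give $h^{-s}\|(f^h(\a))\|_{\ell^\tau}\lesssim |f|_{F^s_{\tau,q}}$ uniformly in $h$. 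Combining with the linear tail $\|f-\wt Q_hf\|_p\le Ch^s\|f\|_{F^s_{\tau,q}}$ (note $F^s_{\tau,q}\hookrightarrow L^p$ with the given indices, by Sobolev embedding) and choosing $h\sim N^{-1/d}$ balances the two terms and produces $\hat\sigma_N(f)_p\le CN^{-s/d}\|f\|_{F^s_{\tau,q}}$.

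The main obstacle I anticipate is the bookkeeping that links the quasi-interpolant coefficients $\{f^h(\a)\}_{\a\in A}$ to the wavelet coefficients $\{f_v\}_{v\in\VV}$ with the right $h$-powers and summation indices, so that the discrete $\ell^\tau$ quantity is bounded \emph{uniformly in $h$} by $|f|_{F^s_{\tau,q}}$; this requires carefully exploiting that $S_\a$ is a local average over $B_h(\a h)$, that distinct $\a$'s have bounded overlap (quasi-uniformity), and that $\tau,q$ were chosen precisely so the Triebel--Lizorkin norm controls the relevant $\ell^\tau(L^\tau)$-type expression at every dyadic scale $\gtrsim h$ simultaneously. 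Handling scales finer than $h$ (where $\wt Q_h$ does not ``see'' the wavelet content) is absorbed into the linear error term, and the interplay of these two regimes — together with verifying that one may use a single $h$ rather than a multiscale selection — is where the proof needs the most care; the rest is the by-now-standard DeVore--Ron machinery.
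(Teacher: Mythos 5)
There is a genuine gap: your reduction to a \emph{single} scale $h\sim N^{-1/d}$ cannot work for the endpoint space $F^s_{\tau,q}$ with $1/\tau=1/p+s/d$, and the ``linear tail'' estimate $\|f-\wt Q_h f\|_p\le Ch^s\|f\|_{F^s_{\tau,q}}$ on which your argument rests is false. Take $f=f_vw_v$ a single wavelet with $\ell(I_v)\ll h$. Then $\|f\|_{L^p}\sim |f_v|\,|I_v|^{1/p}$, while $|f|_{F^s_{\tau,q}}\sim \ell(I_v)^{-s}|f_v|\,|I_v|^{1/\tau}=|f_v|\,|I_v|^{1/p}$ precisely because $\tau$ sits on the Sobolev embedding line; so $\|f\|_p\sim\|f\|_{F^s_{\tau,q}}$ with no gain in powers of $\ell(I_v)$, and since $\wt Q_h$ is built from averages over balls of radius $h\gg\ell(I_v)$ it cannot resolve $w_v$, giving $\|f-\wt Q_hf\|_p\gtrsim \|f\|_{F^s_{\tau,q}}$ rather than $h^s\|f\|_{F^s_{\tau,q}}$. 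This is not a bookkeeping issue that more ``care'' can repair: attaining the rate $N^{-s/d}$ on the Sobolev line is exactly the regime where linear approximation at any fixed scale fails and a multiscale selection is mandatory. (Your appeal to ``$F^s_{\tau,q}\hookrightarrow L^p$ by Sobolev embedding'' gives boundedness, not decay in $h$.)

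The paper's proof never applies $\wt Q_h$ to $f$ at one scale. Instead, by Lemma \ref{lem-6-2}, each individual wavelet $w_v$ is approximated by $\wt Q_{h_v}w_v$ at its \emph{own} scale $h_v\sim \ell(I_v)(N_v+1)^{-1/d}$, which uses at most $N_v$ centres and has pointwise error $\lesssim (N_v+1)^{-(\ell+1)/d}\bigl[M\newchi_{\bar I_v}\bigr]^{1+m_2/d}$; the budgets are allocated by $a_v=a|I_v|^q|f_v|^qM_{q,v}^{\tau-q}$ using the tree-maximal quantities $M_{q,v}$, with $a$ chosen so that $\sum_v a_v\le N$. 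The resulting errors are then summed via the Fefferman--Stein vector-valued maximal inequality (Lemma \ref{6-3-lem}) and the discrete Hardy-type inequality of DeVore and Ron (Lemma \ref{lem-6-4}). Your single-scale thresholding of $\{f^h(\a)\}$ together with a Stechkin bound would only yield the theorem for classes admitting a Jackson inequality of order $s$ measured in $L^p$ itself; for $F^s_{\tau,q}$ with $\tau<p$ the multiscale construction is unavoidable.
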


The proof of Theorem \ref{thm-6-1}  follows along the same line as
that of Theorem 6.2 of \cite{DR}. It relies on a series of  lemmas
with which we will begin.

  \begin{lem}\label{lem-6-2}For each $v\in\VV$ and $N_v\in\ZZ_+$, there exists a positive number $h\sim \ell(I_v) (N_v+1)^{-1/d}$ such that
  $$ S_{v,N_v}:= \wt{Q}_h w_v\in \Sigma_{N_v},$$
  and
 \begin{align}
 R_v(x):&=| w_v(x) -S_{v,N_v}(x)|\label{6-2-1}\\
  &\leq C ( N_v+1)^{-(\ell+1)/d} \Bl[M (\newchi_{\bar{I_v}})(x)\Br]^{1+\f {m_2}d},\  \  x\in\RR^d.\label{6-3-1}\end{align}
\end{lem}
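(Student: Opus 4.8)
The plan is to realize $S_{v,N_v}$ as a modified quasi-interpolant of $w_v$ at an appropriately scaled grid, and then apply the pointwise estimate from Corollary~\ref{cor-6-5-18} with $r=\ell+1$. First I would choose $h\sim \ell(I_v)(N_v+1)^{-1/d}$ and observe that, since $w_v$ is supported on the cube $\bar I_v$ of side $\sim\ell(I_v)$, only those $\a\in A$ with $h\a$ lying in a fixed dilate of $\bar I_v$ contribute a nonzero coefficient $S_\a(\sa_h w_v)$ to $\wt Q_h w_v=\sum_{\a\in A}S_\a(\sa_h w_v)\psi_\a(\cdot/h)$. By the quasi-uniform distribution of $A$ (the constants $c_0,C_1$), the number of such $\a$ is $\le C (\ell(I_v)/h)^d\sim C(N_v+1)$, so that after absorbing the constant into $N_v$ (or choosing the implied constant in $h\sim\ell(I_v)(N_v+1)^{-1/d}$ small enough) we get $\wt Q_h w_v\in\Sigma_{N_v}$; this gives \eqref{6-2-1}.

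For the bound \eqref{6-3-1}, I would apply Corollary~\ref{cor-6-5-18} to $f=w_v$ with $r=\ell+1$ (legitimate since $w_v\in C^{m_2}\supset C^{\ell+1}$ and $\ell+1<m_2$ by the standing assumption of this section, so we are in the case $r<m_2$ with no log term): this yields
$$R_v(x)=|w_v(x)-\wt Q_h w_v(x)|\le C h^{\ell+1}\, M(\|\nabla^{\ell+1} w_v\|)(x).$$
Then I would plug in the wavelet derivative bound \eqref{6-1}: $|D^\a w_v(x)|\le C\ell(I_v)^{-|\a|}\newchi_{\bar I_v}(x)$ for $|\a|\le m_2$, hence $\|\nabla^{\ell+1}w_v(x)\|\le C\ell(I_v)^{-(\ell+1)}\newchi_{\bar I_v}(x)$, so $M(\|\nabla^{\ell+1}w_v\|)(x)\le C\ell(I_v)^{-(\ell+1)}M(\newchi_{\bar I_v})(x)$. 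Combined with $h^{\ell+1}\sim \ell(I_v)^{\ell+1}(N_v+1)^{-(\ell+1)/d}$, the factors $\ell(I_v)^{\pm(\ell+1)}$ cancel and I obtain $R_v(x)\le C(N_v+1)^{-(\ell+1)/d}M(\newchi_{\bar I_v})(x)$.

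The remaining point is to upgrade the linear factor $M(\newchi_{\bar I_v})(x)$ to the power $[M(\newchi_{\bar I_v})(x)]^{1+m_2/d}$ claimed in \eqref{6-3-1}. This uses the decay of $\psi_\a$: since each $S_\a(\sa_h w_v)$ is controlled by the mean of $|w_v|$ over $B_h(\a h)$ (by \eqref{4-2-0}), it is nonzero only when $\a h$ is within $O(h)$ of $\bar I_v$, and for such $\a$ the sum $\sum_\a|\psi_\a(x/h)|$ over the $O(N_v)$ relevant indices, weighted by the pointwise bound \eqref{1-1} $|\psi_\a(x/h)|\le C(1+\|x/h-\a\|)^{-d-m_2}$, decays like $(1+\operatorname{dist}(x,\bar I_v)/h)^{-m_2}$ away from the support cube. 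Since for $x$ at distance $t\cdot\ell(I_v)$ from $\bar I_v$ one has $M(\newchi_{\bar I_v})(x)\sim (1+t)^{-d}$, while the crude bound already gives $R_v(x)\le C(N_v+1)^{-(\ell+1)/d}$ on the support and the extra $\psi$-decay provides a further factor $(1+t(N_v+1)^{1/d})^{-m_2}$, a short interpolation between these two estimates yields the power $1+m_2/d$ on $M(\newchi_{\bar I_v})(x)$; I would carry this out by splitting into the region near $\bar I_v$ (where $M(\newchi_{\bar I_v})\sim 1$ and the crude bound suffices) and the far region (where one trades the $\psi$-decay against powers of $M(\newchi_{\bar I_v})$). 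The main obstacle is precisely this last step: tracking the off-support decay of $\wt Q_h w_v$ carefully enough — using \eqref{1-1}, the localization of the coefficients, and quasi-uniformity of $A$ — to convert it into the stated power of the maximal function, rather than the routine application of the corollary which handles the on-support behaviour.
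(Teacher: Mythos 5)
Your outline follows the paper's own proof almost step for step: the same counting of the active centres via the support of $w_v$ and quasi-uniformity to get $\wt Q_h w_v\in\Sigma_{N_v}$, the same use of Corollary \ref{cor-6-5-18} with $r=\ell+1$ together with \eqref{6-1} for the bound near $\bar I_v$, the same use of the decay \eqref{1-1} of $\psi_\a$ away from $\bar I_v$, and the same final conversion via $M(\newchi_{\bar I_v})(x)\gtrsim (1+\|x-c_v\|/\ell(I_v))^{-d}$.

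The one place where your sketch is weaker than what is needed is the far-field step. When you simplify the sum over the $O(N_v)$ active centres to ``decays like $(1+\operatorname{dist}(x,\bar I_v)/h)^{-m_2}$'', you have discarded exactly the extra factor $(\operatorname{dist}/\ell(I_v))^{-d}$ that produces the exponent $1+m_2/d$ rather than $m_2/d$; and no convex interpolation between the two bounds you then hold (the global bound $C(N_v+1)^{-(\ell+1)/d}M(\newchi_{\bar I_v})(x)$ and the far bound $C\,t^{-m_2}N_v^{-m_2/d}$) recovers the exponent $-(m_2+d)$ in $t=\|x-c_v\|/\ell(I_v)$ — you may only take the minimum of two valid upper bounds, not their product. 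The fix is simply not to simplify: keeping the full exponent from \eqref{1-1}, the far-field sum is at most $\#\Ld\cdot(1+\operatorname{dist}/h)^{-m_2-d}\leq C N_v\,(\ell(I_v)/h)^{-m_2-d}\,t^{-m_2-d}=C N_v^{-m_2/d}\,t^{-m_2-d}$, which, together with $m_2\geq\ell+1$ and the lower bound for $M(\newchi_{\bar I_v})$, yields \eqref{6-3-1} directly with no interpolation. This is exactly how the paper concludes.
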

\begin{proof} By \eqref{6-1}, the estimate holds trivially if $N_v=0$. For the rest of the proof, we assume that $N_v\ge 1$.
Let $h$ be a positive number  to be specified in a moment, and let
$$\Ld:=\Bl\{\a\in A:\  \  \|\a h-c_{v}\|\leq  C  A_0 \ell(I_v)\Br\},$$
where  $c_v$ denotes the centre of the dyadic cube $I_v$, and $C$ is a large positive constant depending only on $d$.  Then $$\# \Ld \leq C \Bl( 1+\f {\ell(I_v)} {h}\Br)^{d}.$$
Thus, we may choose $ h\sim \ell(I_v) N_v^{-1/d}$ so that
$\# \Ld\leq N_v$ and
$$\bigcup_{x\in \bar{I_v}} B_{4h}(x)\subset B_{ C  A_0\ell(I_v)}(c_v).$$
Since $w_v$ is supported in $\bar{I}_v$, it follows that $w_v$ is identically zero on $\bigcup_{\a \in A\setminus\Ld} B_{2h}(\a h)$.
 This in turn implies that
$$\ell_{\a} [(w_v)_h]=S_0[ w_v (h\cdot+h\a)]=0,\   \    \    \  \forall \a\in A\setminus\Ld,$$
and hence
$$Q_h w_v=\sum_{\a\in\Ld}\ell_{\a} [(w_v)_h] \psi_\a(h^{-1} \cdot )\in \Sigma_{N_v}.$$
Therefore, on one hand, using \eqref{4-15} and \eqref{6-1},
\begin{align}
    &|w_v(x)-\wt{Q}_h w_v(x)|\leq C h^{\ell+1} M(\|\nabla^{\ell+1} w_v\|)(x)\notag\\
    &\leq C \Biggl(\f{h}{\ell(I_v)}\Biggr)^{\ell+1} \leq C N_v^{-(\ell+1)/d} ,\  \  x\in\RR^d.\label{6-2}
\end{align}
On the other hand, if $\|x-c_v\|\ge 2CA_0\ell(I_v)$,  then $w_v(x)=0$, and
\begin{align}
 |w_v(x)-\wt{Q}_h w_v(x)|&\leq C\sum_{\a\in\Ld}|\ell_{\a} [(w_v)_h]| \Bl( 1+h^{-1}\| x -\a h\|\Br)^{-m_2-d}\notag\\
 &\leq C\sum_{\a\in\Ld} \Bl( h^{-1}\| x -c_v\|\Br)^{-m_2-d}\f 1{|B_h(\a h)|}\int_{B_h(\a h)} |w_v(z)|\, dz\notag\\
 &\leq C \Biggl( \f {\|x-c_v\|}{\ell(I_v)}\Biggr)^{-m_2-d}  \Bl( \f  { \ell(I_v)}h\Br)^{-m_2-d}\#\Ld\notag\\
 &\leq C \Biggl( 1+ \f {\|x-c_v\|}{\ell(I_v)}\Biggr)^{-m_2-d} N_v^{-m_2/d}.\label{6-3}
\end{align}
Therefore, combining \eqref{6-2} with \eqref{6-3}, and in view of the fact that $m_2\ge \ell+1$,  we deduce that
\begin{equation}\label{}
    |w_v(x)-\wt{Q}_h w_v(x)|\leq CN_v^{-\f{\ell+1}d}\Bl( 1+ \f {\|x-c_v\|}{\ell(I_v)}\Br)^{-m_2-d},\  \ x\in\RR^d.
\end{equation}
Finally, to complete the proof, we just need to observe that
$$ M(\newchi_{I_v})(x)\ge C_0 \Biggl( 1+ \f{\|x-c_v\|}{\ell(I_v)}\Biggr)^{-d},\   \    \forall x\in\RR^d. $$

\end{proof}

\begin{lem}\label{6-3-lem} Let $R_v(x)$ be as defined in \eqref{6-2-1}. Then for $1< p< \infty$,
$$\Bl\| \sum_{v\in\VV} |f_v|R_v \Br\|_{L^p(\RR^d)} \leq C \Bl\| \sum_{v\in\VV} (1+ N_v)^{-(\ell+1)/d} |f_v|\newchi_{\bar{I_v}}\Br\|_{L^p(\RR^d)}.$$
\end{lem}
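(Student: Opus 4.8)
The plan is to reduce the vector-valued inequality for $\sum_v |f_v| R_v$ to the corresponding inequality for $\sum_v (1+N_v)^{-(\ell+1)/d}|f_v|\newchi_{\bar I_v}$ by absorbing the extra factor $[M(\newchi_{\bar I_v})]^{1+m_2/d}$ coming from the pointwise bound \eqref{6-3-1} in Lemma~\ref{lem-6-2}. The natural tool is the Fefferman--Stein vector-valued maximal inequality: for $1<p<\infty$ and $1<u\le\infty$ one has $\|(\sum_k (Mg_k)^u)^{1/u}\|_p \le C\|(\sum_k |g_k|^u)^{1/u}\|_p$. Here the exponent mismatch is handled by the device of raising to a small power: write $1+m_2/d = (1+m_2/d)$ and observe that since $m_2>0$ we may pick $\lambda\in(0,1)$ with $\lambda(1+m_2/d)>1$, say $\lambda = \tfrac12(1 + d/(d+m_2))$ or any convenient value, so that $[M(\newchi_{\bar I_v})]^{1+m_2/d} = \big([M(\newchi_{\bar I_v})]^{\lambda}\big)^{(1+m_2/d)/\lambda}$ is of the form $(M\newchi)^{\text{(power }>1)}$ raised again. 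More directly, using $M(\newchi_{\bar I_v}) = M(\newchi_{\bar I_v}^{1/u})^u$ is not quite right; instead I would use the standard fact that for any $0<\theta<1$, $M(g)^\theta \le C\, M(g^\theta)$ is false in general, but $[M\newchi_E]^a$ for $a>1$ is comparable to $M$ of a power only up to the following trick.

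The cleanest route is: set $u := p\cdot\frac{1}{1+m_2/d}$ is not guaranteed $>1$, so instead argue as follows. By \eqref{6-3-1},
\[
\sum_{v\in\VV}|f_v|R_v(x) \le C\sum_{v\in\VV}(1+N_v)^{-(\ell+1)/d}|f_v|\,\big[M(\newchi_{\bar I_v})(x)\big]^{1+m_2/d}.
\]
Put $b_v(x) := (1+N_v)^{-(\ell+1)/d}|f_v|\newchi_{\bar I_v}(x)$ and note $[M(\newchi_{\bar I_v})(x)]^{1+m_2/d} = [M(\newchi_{\bar I_v})(x)]^{1+m_2/d}$ while $\newchi_{\bar I_v}(x)^{1+m_2/d}=\newchi_{\bar I_v}(x)$; so the right-hand side equals $C\sum_v b_v^{\,\ast}(x)$ where one writes each term as $\big[M(g_v)(x)\big]^{1+m_2/d}$ with $g_v := \big((1+N_v)^{-(\ell+1)/d}|f_v|\big)^{d/(d+m_2)}\newchi_{\bar I_v}$ (taking the $d/(d+m_2)$-th root so that $M(g_v)^{(d+m_2)/d}$ recovers the coefficient times $M(\newchi_{\bar I_v})^{1+m_2/d}$, using that $M(c\,\newchi_E)=c\,M(\newchi_E)$ for $c\ge 0$). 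Then I would apply the $\ell^1$-triangle inequality followed by Hölder or, better, the vector-valued maximal inequality with exponent $\mathsf{q} := \frac{d+m_2}{d}>1$ and Lebesgue exponent $\mathsf{p} := p\cdot\frac{d}{d+m_2}$; but $\mathsf p>1$ must be checked — this is exactly why the hypothesis $1<p$ alone might be too weak and one needs $m_2$ large (indeed the paper assumes $\ell+1<m_2$, and more is available). Since $m_2$ is "sufficiently large", $\mathsf p = pd/(d+m_2)$ could drop below $1$; the fix is to instead use the elementary pointwise bound $\sum_v [M(\newchi_{\bar I_v})]^{1+m_2/d} \le \big(\sum_v M(\newchi_{\bar I_v})\big)$ is wrong — so the honest argument is: apply Fefferman--Stein in the form $\|\sum_v M(g_v)^{\mathsf q}\|_{L^{\mathsf p}}$ requires $\mathsf p,\mathsf q>1$, valid as long as $p > 1+ m_2/d$, OR use the $L^p(\ell^q)$ maximal inequality keeping $q=1$ which needs $p>1$ and then use a more careful summation. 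Given the paper's phrasing, I would simply invoke the Fefferman--Stein inequality in the scalar-into-vector form that suits, citing \cite{M} or a standard reference, with parameters chosen so both exponents exceed $1$ — the paper's standing assumption makes $m_2$ as large as we like, and at worst one restricts attention (harmlessly, by density and the structure of the $F^s_{\tau,q}$ norm) to the regime where $p>1+m_2/d$ fails only if... actually the correct and standard move here, used by DeVore--Ron, is:

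\textbf{Key step.} Choose any $\mathsf q$ with $1<\mathsf q<\min\{p, 1+m_2/d\}$ (possible since $p>1$ and $m_2>0$). Then $1+m_2/d \ge \mathsf q$, and since each $M(\newchi_{\bar I_v})\le 1$ pointwise we have $[M(\newchi_{\bar I_v})]^{1+m_2/d}\le [M(\newchi_{\bar I_v})]^{\mathsf q}$. Hence
\[
\sum_{v}|f_v|R_v(x) \le C\sum_v (1+N_v)^{-(\ell+1)/d}|f_v|\,[M(\newchi_{\bar I_v})(x)]^{\mathsf q}
= C\sum_v \big[M(g_v)(x)\big]^{\mathsf q},
\]
with $g_v := \big((1+N_v)^{-(\ell+1)/d}|f_v|\big)^{1/\mathsf q}\newchi_{\bar I_v}$ (using $M(c\newchi_E)=cM(\newchi_E)$). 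Now apply the Fefferman--Stein vector-valued maximal inequality with exponents $\big(p/\mathsf q,\; \infty\big)$... no — with the $L^{p/\mathsf q}(\ell^1)$ formulation: $\big\|\sum_v M(g_v)^{\mathsf q}\big\|_{L^{p/\mathsf q}}$... the right statement is $\|\sum_k Mg_k\|_{L^s}\le C\|\sum_k |g_k|\|_{L^s}$ for $s>1$, which is just $L^s$-boundedness of $M$ applied term by term after the triangle inequality? No, that loses. The genuinely correct tool is $\big\|(\sum_v (Mg_v)^{\mathsf q})^{1/\mathsf q}\big\|_{L^{p}}\le C\big\|(\sum_v|g_v|^{\mathsf q})^{1/\mathsf q}\big\|_{L^p}$, valid for $1<\mathsf q\le\infty$, $1<p<\infty$ (Fefferman--Stein). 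Apply it with this $\mathsf q$ and our $g_v$:
\[
\Big\|\sum_v M(g_v)^{\mathsf q}\Big\|_{L^{p/\mathsf q}}^{1/\mathsf q} = \Big\|\big(\sum_v M(g_v)^{\mathsf q}\big)^{1/\mathsf q}\Big\|_{L^p} \le C\Big\|\big(\sum_v |g_v|^{\mathsf q}\big)^{1/\mathsf q}\Big\|_{L^p} = C\Big\|\sum_v (1+N_v)^{-(\ell+1)/d}|f_v|\newchi_{\bar I_v}\Big\|_{L^{p/\mathsf q}}^{1/\mathsf q}\cdot(\ldots)
\]
— cleaning up the exponent bookkeeping ($|g_v|^{\mathsf q} = (1+N_v)^{-(\ell+1)/d}|f_v|\newchi_{\bar I_v}$ and $\big(\sum_v|g_v|^{\mathsf q}\big)^{1/\mathsf q} = \big(\sum_v (1+N_v)^{-(\ell+1)/d}|f_v|\newchi_{\bar I_v}\big)^{1/\mathsf q}$ since the cubes' overlap is bounded — actually $\newchi_{\bar I_v}^{\mathsf q}=\newchi_{\bar I_v}$, good) gives exactly
\[
\Big\|\sum_v|f_v|R_v\Big\|_{L^p} \le C\Big\|\sum_v(1+N_v)^{-(\ell+1)/d}|f_v|\,[M(\newchi_{\bar I_v})]^{\mathsf q}\Big\|_{L^p} \le C\Big\|\sum_v(1+N_v)^{-(\ell+1)/d}|f_v|\newchi_{\bar I_v}\Big\|_{L^p},
\]
where the last step uses $[M(\newchi_{\bar I_v})]^{\mathsf q}\ge \newchi_{\bar I_v}$ is the wrong direction — rather, we used Fefferman--Stein to pass FROM $M(g_v)^{\mathsf q}$ TO $|g_v|^{\mathsf q}=\newchi_{\bar I_v}\cdot$coeff, which is precisely the claimed right-hand side. \emph{That} is the proof.

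\textbf{Summary of the argument.} Insert \eqref{6-3-1} into the sum; peel off the coefficient $(1+N_v)^{-(\ell+1)/d}|f_v|$ inside the maximal function using homogeneity of $M$; bound $[M(\newchi_{\bar I_v})]^{1+m_2/d}\le[M(\newchi_{\bar I_v})]^{\mathsf q}$ for a fixed $\mathsf q\in(1,1+m_2/d]$; recognize the resulting quantity as $\sum_v M(g_v)^{\mathsf q}$ with $g_v=(\text{coeff})^{1/\mathsf q}\newchi_{\bar I_v}$; and apply the Fefferman--Stein vector-valued maximal inequality in $L^p(\ell^{\mathsf q})$, valid since $1<\mathsf q$ and $1<p<\infty$, to replace $M(g_v)$ by $g_v$, which yields the stated bound. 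The main obstacle — and the only subtle point — is the exponent matching: one must choose $\mathsf q>1$, which is possible precisely because $m_2>0$, and one must be careful that $\newchi_{\bar I_v}^{\mathsf q}=\newchi_{\bar I_v}$ so that the right-hand side of Fefferman--Stein is genuinely $\|\sum_v(1+N_v)^{-(\ell+1)/d}|f_v|\newchi_{\bar I_v}\|_{L^p}$ and not some $\ell^{\mathsf q}$-quasi-norm of it; this works because on the right we have an honest sum (the cubes in $\DD_j$ at a fixed scale are disjoint, though across scales they overlap — but that overlap is harmless here since we never needed $\ell^{\mathsf q}$ versus $\ell^1$ comparison, the Fefferman--Stein output is literally $\sum_v|g_v|^{\mathsf q}$). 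I would cite \cite{M} or a standard harmonic analysis reference for the Fefferman--Stein inequality and keep the write-up to these few lines.
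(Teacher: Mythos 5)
Your argument is correct and is essentially the paper's own proof: insert the pointwise bound \eqref{6-3-1}, pull the coefficient $(1+N_v)^{-(\ell+1)/d}|f_v|$ inside the maximal function by homogeneity, and apply the Fefferman--Stein vector-valued maximal inequality; the paper simply takes $q=1+m_2/d$ and works in $L^{qp}(\ell^{q})$, where both exponents exceed $1$ automatically, so your extra restriction $\mathsf q<p$ and the bound $[M\newchi_{\bar I_v}]^{1+m_2/d}\le[M\newchi_{\bar I_v}]^{\mathsf q}$ are unnecessary. The one bookkeeping slip is the intermediate display where you take the $L^{p/\mathsf q}$ norm of $\sum_v M(g_v)^{\mathsf q}$; what is needed, and what your final chain in fact uses, is the $L^{p}$ norm of that sum, i.e.\ Fefferman--Stein applied in $L^{p\mathsf q}(\ell^{\mathsf q})$, which is still legitimate since $p\mathsf q>1$ and $\mathsf q>1$.
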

\begin{proof} Using \eqref{6-3-1}, and setting $q=1+\f {m_2}d$,  we have
\begin{align*}
   \Bl\| \sum_{v\in\VV} |f_v|R_v \Br\|_{L^p(\RR^d)} &\leq C\Bl\| \Bl(\sum_{v\in\VV} |f_v| ( N_v+1)^{-\f{\ell+1}d} \Bl[M \newchi_{\bar{I_v}}(x)\Br]^{q}\Br)^{1/q}\Br\|^q_{L^{qp}(\RR^d)},
   \end{align*}
   which, using the Fefferman-Stein inequality (see \cite[p. 55, 1.3.1]{S}, is estimated by
   \begin{align*}
    C\Bl\| \sum_{v\in\VV} |f_v| ( N_v+1)^{-\f{\ell+1}d} \newchi_{\bar{I_v}}\Br\|_{L^{p}(\RR^d)}.
   \end{align*}
\end{proof}

\begin{lem}\label{lem-6-4}\cite[Lemma 6.3]{DR}  Let $\{z_j\}_{j=-\infty}^\infty$ be a sequence of nonnegative numbers with $Z:=\sum_{j\in\ZZ} z_j <\infty$. Then for any $\va>0$,
$$\sum_{j\in\ZZ} z_j \Biggl (\sum_{k=-\infty}^j z_k\Biggr)^{\va-1}\leq C_\va Z^\va,$$
where it is understood that $0\cdot \infty=0$.
\end{lem}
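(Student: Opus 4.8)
The plan is to rewrite the sum in terms of partial sums and then split into two regimes of $\va$. Put $S_j:=\sum_{k=-\infty}^j z_k$, so that $(S_j)_{j\in\ZZ}$ is nondecreasing, $S_j\to 0$ as $j\to-\infty$, $S_j\to Z$ as $j\to+\infty$, and $z_j=S_j-S_{j-1}$; the quantity in question is $\sum_{j\in\ZZ}(S_j-S_{j-1})S_j^{\va-1}$, where the convention $0\cdot\infty=0$ disposes of the indices with $z_j=0$ or $S_j=0$. Since the series $\sum_j z_j$ is nonnegative and convergent, any telescoping or rearrangement used below is legitimate.

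First I would treat the case $\va\ge 1$. Then $t\mapsto t^{\va-1}$ is nondecreasing on $[0,Z]$, so $S_j^{\va-1}\le Z^{\va-1}$ for every $j$, and telescoping gives
$$\sum_{j\in\ZZ} z_j S_j^{\va-1}\le Z^{\va-1}\sum_{j\in\ZZ}(S_j-S_{j-1})=Z^{\va-1}\cdot Z=Z^\va ,$$
so the constant $C_\va=1$ suffices in this range.

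Next I would treat $0<\va<1$, which is the only place needing a short argument. Here $t\mapsto t^{\va-1}$ is nonincreasing, so on the interval $[S_{j-1},S_j]$ one has $t^{\va-1}\ge S_j^{\va-1}$, whence
$$z_j S_j^{\va-1}=(S_j-S_{j-1})S_j^{\va-1}\le\int_{S_{j-1}}^{S_j} t^{\va-1}\,dt .$$
The intervals $[S_{j-1},S_j]$ have pairwise disjoint interiors and their union is $(0,Z)$, so summing over $j\in\ZZ$ and using $\va>0$ to guarantee integrability at the origin yields
$$\sum_{j\in\ZZ} z_j S_j^{\va-1}\le\int_0^Z t^{\va-1}\,dt=\f{Z^\va}{\va},$$
i.e.\ $C_\va=1/\va$ works. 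Taking $C_\va:=\max\{1,1/\va\}$ then covers every $\va>0$.

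I do not anticipate a genuine obstacle: the lemma is elementary (indeed it is Lemma~6.3 of \cite{DR}, which one could simply cite), and the only points requiring attention are the bookkeeping around degenerate terms — where both sides vanish, consistently with the convention $0\cdot\infty=0$ — and the validity of telescoping a nonnegative convergent series.
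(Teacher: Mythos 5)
Your proof is correct. The paper does not prove this lemma but simply cites it as Lemma~6.3 of DeVore--Ron, and your argument (telescoping for $\va\ge 1$, comparison of $(S_j-S_{j-1})S_j^{\va-1}$ with $\int_{S_{j-1}}^{S_j}t^{\va-1}\,dt$ for $0<\va<1$) is exactly the standard proof of that result, with the degenerate terms handled consistently with the convention $0\cdot\infty=0$.
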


We are now in a position to prove Theorem \ref{thm-6-1}.\\

 {\it Proof of Theorem \ref{thm-6-1}.}\   \  The proof of Theorem \ref{thm-6-1}  follows along the same line as that of Theorem 6.2 of \cite{DR}.
  Fix a nonzero function  $f\in F_{\tau,q}^s$, and a positive integer $N$. Given $x\in\RR^d$, let $\VV_x$ denote the set of all $v\in\VV$ such that $\newchi_{I_v}(x)\neq 0$.    Define for each $x\in\RR^d$ and each $v'\in\VV$,
$$M_{q,v'}(x):=\Biggl( \sum_{v\in \VV_x: v\ge   v'} |I_v|^{-qs/d} |f_v|^q \Biggr)^{1/q},$$
where we say $v\ge v'$ if either $|I_v|>|I_{v'}|$ or $|I_v|=|I_{v'}|$ and $e_v\ge e_{v'}$.
Notice that if $x\in I_{v'}$, then
\begin{align}
M_{q,v'}(x)^q=\Biggl( \sum_{v\in \VV: I_v\supsetneqq I_{v'}} +
\sum_{v\in \VV: I_v= I_{v'}, e_v\ge e_{v'} }\Biggr)|I_v|^{-qs/d} |f_v|^q.\label{6-7-0}
\end{align}
Thus, $M_{q,v'}(x)$ is actually a constant on $I_{v'}$, which we denote by $M_{q,v'}$. Thus,
$$M_{q,v'}:=M_{q,v'}(x)\leq \Delta_{s,q}(f)(x),\   \  x\in I_{v'}.$$
Let
\begin{equation}\label{6-7}a_v=a|I_v|^q |f_v|^q M_{q,v}^{\tau-q},\   \ v\in\VV,\end{equation}
where $a$ will be specified in a moment. Since  $\tau-q> 0$ and $q=1-qs/d$, it follows that
\begin{align*}
a_v=\Bl\|a|I_v|^{-qs/d} |f_v|^q M_{q,v}^{\tau-q} \newchi_{I_v}\Br\|_1\leq \Bl\|a |I_v|^{-qs/d} |f_v|^q (\Delta_{s,q} f)^{\tau-q} \newchi_{I_v}\Br\|_1.
\end{align*}
Thus,
\begin{align*}
    \sum_{v\in \VV} a_v &\leq a \Bigl\| \Delta_{s,q} (f)^{\tau-q} \sum_{v\in\VV} |I_v|^{-qs/d} |f_v|^q \newchi_{I_v}\Bigr\|_1 \\
    &\leq a \|\Delta_{s,q} (f)^\tau \|_1= a \|\Delta_{s,q} (f) \|_\tau^\tau =a\|f\|_{F_{\tau,q}^s }^\tau.
\end{align*}
Now  choose $a$ so that $a \|f\|_{F_{\tau,q}^s }^\tau=N$,
and let $N_v$ be equal to $\lceil a_v \rceil$, the greatest  integer $\leq a_v$. Then $\sum_{v\in\VV} N_v \leq N$.
Now for each $v\in\VV$, let $S_{v,N_v}\in\Sigma_{N_v}$ be as in Lemma \ref{lem-6-2}, and let
$$S:=\sum_{v\in\VV} S_{v, N_v}f_v.$$
 Then $S\in\Sigma_N$, and using Lemma \ref{6-3-lem}, we have that
\begin{align*}
    \|f-S\|_p&=\Bl\|\sum_{v\in\VV} f_v w_v -\sum_{v\in\VV} f_v s_{v, N_v}\Br\|_p\leq \Bl\|\sum_{v\in\VV} |f_v| |w_v-S_{v, N_v}|\Br\|_p \\
     &\leq C\Bl\|\sum_{v\in\VV} (1+a_v)^{-(\ell+1)/d} |f_v|\newchi_{\bar{I_v}}\Br\|_p \leq  C
    \Bl\|\sum_{v\in\VV} (1+a_v)^{-s/d} |f_v|\newchi_{\bar{I_v}}\Br\|_p\\
    &=:C\|\sum_{v\in\VV} E_v\|_p,
\end{align*}
where
$$E_v(x):=(1+a_v)^{-s/d} |f_v|\newchi_{\bar{I_v}}(x).$$

If $a_v>0$, then using \eqref{6-7},  for $x\in \bar{I_v}$,
\begin{align}
    E_v(x)&\leq a_v^{-s/d} |f_v|\newchi_{\bar{I_v}}(x)=
    a^{-s/d} |I_v|^{-qs/d} |f_v|^{1-qs/d} M_{q,v}^{\f \tau p-q}\notag\\
    &=a^{-s/d} |I_v|^{-qs/d} |f_v|^q \Biggl( \sum_{v'\in\VV:  v'\ge v}|I_{v'}|^{-qs/d} |f_{v'}|^q \Biggr)^{\f{\tau}{pq}-1}.\label{6-9}
\end{align}
On the other hand, if $a_v=0$, then by \eqref{6-7-0} and \eqref{6-7}, we have that $f_v=0$,  $E_v(x)=0$, and \eqref{6-9} remains true under the agreement that $0\cdot \infty=0$. Thus,
using Lemma \ref{lem-6-4}, it follows that
\begin{align*}
\sum_{v\in\VV} E_v(x)& \leq C a^{-s/d}\sum_{v\in\VV}  |I_v|^{-qs/d} |f_v|^q \newchi_{\bar{I_v}}(x)\Bl( \sum_{v'\in\VV:  v'\ge v}|I_{v'}|^{-qs/d} |f_{v'}|^q \newchi_{\bar{I_{v'}}}(x) \Br)^{\f{\tau}{pq}-1}\\
&\leq C a^{-s/d}(\Delta_{s,q} (f)(x))^{\tau/p}.\end{align*}
Recalling that $a=N \|f\|_{F_{\tau,q}^s}^{-\tau}$ and $\f \tau p+\f{\tau s}d=1$, we conclude that
$$\Bl\|\sum_{v\in\VV} E_v \Br\|_p \leq C a^{-s/d} \|\Delta_{s,q} (f)\|_\tau^{\tau/p}
=C N^{-s/d} \|f\|_{F_{\tau,q}^s}.$$
This completes the proof.
\hb

\def\bi{\bibitem}

\end{document}